\documentclass[leqno,12pt]{amsart}
\usepackage{amssymb}
\usepackage{amsmath}
\usepackage{enumerate}
\usepackage{amsfonts}
\usepackage{hyperref}
\usepackage{mathrsfs}

\usepackage[headheight=18pt, top=20mm, bottom=20mm, left=22mm, right=22mm]{geometry}

\usepackage{tikz}
\usepackage{pgfplots}
\pgfplotsset{compat=1.18}

\definecolor{darkgreen}{RGB}{45, 119, 75}

\newtheorem{theorem}{Theorem}[section]
\newtheorem{corollary}[theorem]{Corollary}
\newtheorem{lemma}[theorem]{Lemma}
\newtheorem{proposition}[theorem]{Proposition}
\newtheorem{remark}[theorem]{Remark}
\newtheorem{definition}[theorem]{Definition}

\numberwithin{equation}{section}

\hypersetup{
    colorlinks=true,
    linkcolor= blue,
    citecolor =cyan,
    urlcolor = teal,
}

\makeatletter
\@ifundefined{c@part}{\newcounter{part}}{}

\renewcommand\part{%
  \@ifstar{\@spart}{\@part}%
}

\def\@part#1{%
  \refstepcounter{part}%
  \addcontentsline{toc}{part}{\protect\numberline{\thepart}#1}%
  \par\vspace{1\bigskipamount}
  \begin{center}%
    \normalfont\large\bfseries Part\ \thepart\quad #1%
  \end{center}%
  \vspace{1\bigskipamount}%
  \@afterheading
}

\def\@spart#1{%
  \addcontentsline{toc}{part}{#1}%
  \par\vspace{1\bigskipamount}
  \begin{center}%
    \normalfont\large\bfseries #1%
  \end{center}%
  \vspace{1\bigskipamount}%
  \@afterheading
}
\makeatother
  
\begin{document}

\title[Remarks on Dunkl multipliers]
{A Remark on H\"ormander Multipliers \\ on Dunkl Hardy Spaces}

\subjclass[2020]{primary: 42B30, secondary: 42B35, 33C52, 42B10, 35K08}
\keywords{multipliers, Dunkl operators, Hardy spaces, tent spaces, atomic decompositions, root systems, Dunkl heat kernel}

\author[Jacek Dziubański]{Jacek Dziubański}
\author[Agnieszka Hejna-Łyżwa]{Agnieszka Hejna-Łyżwa}
\begin{abstract}
Let $\mathcal R$ be a normalized root system in
$\mathbb R^N$ with a nonnegative multiplicity function
$k$, and let $\mathcal F$ be the associated Dunkl
transform. We prove a H\"ormander multiplier theorem on
the Hardy spaces $H^p_{\mathrm{Dunkl}}$, $0<p\le1$,
defined by conical Littlewood--Paley square functions. Let $W_2^s$ denote the classical Sobolev space in $\mathbb{R}^N$.
More precisely, if a multiplier $m$ satisfies
\[
    \sup_{t>0}
    \|\psi(\cdot)m(t\cdot)\|_{W_2^s}<\infty
\]
for a nonzero radial cutoff
$\psi\in C_c^\infty(\mathbb R^N\setminus\{0\})$ and
\[
    s>\mathbf N\left(\frac1p-\frac12\right),
\]
where $\mathbf N$ is the homogeneous dimension, then
the Dunkl multiplier
\[
    \mathcal T_mf=\mathcal F^{-1}(m\mathcal Ff)
\]
extends to a bounded operator on
$H^p_{\mathrm{Dunkl}}$.
\end{abstract}

\address{Jacek Dziubański, Uniwersytet Wroc\l awski,
Instytut Matematyczny,
Pl. Grunwaldzki 2,
50-384 Wroc\l aw,
Poland}
\email{jdziuban@math.uni.wroc.pl}

\address{Agnieszka Hejna-Łyżwa, Uniwersytet Wroc\l awski,
Instytut Matematyczny,
Pl. Grunwaldzki 2,
50-384 Wroc\l aw,
Poland}
\email{hejna@math.uni.wroc.pl}

\maketitle

\section{Introduction and statements of the results}\label{sec:intro}

On the Euclidean space $\mathbb R^N$, we consider a
normalized root system $R$ and a $G$-invariant multiplicity
function $k\ge0$, where $G$ is the finite reflection group
generated by the reflections $\sigma_\alpha$, $\alpha\in R$.
Let
\[
    dw(\mathbf x)
    =\prod_{\alpha\in R}
        |\langle\mathbf x,\alpha\rangle|^{k(\alpha)}
        \,d\mathbf x
\]
be the associated measure, where $d\mathbf x$ denotes
Lebesgue measure. We write
\[
    \mathbf N=N+\sum_{\alpha\in R}k(\alpha)
\]
for the homogeneous dimension.

Let $E(\mathbf x,\mathbf y)$ be the Dunkl kernel associated
with $(R,k)$. It extends uniquely to a holomorphic function
on $\mathbb C^N\times\mathbb C^N$. The Dunkl transform is
defined by
\begin{equation}\label{eq:DunklTransform}
    \mathcal Ff(\xi)
    =c_k^{-1}\int_{\mathbb R^N}
        E(-i\xi,\mathbf x)f(\mathbf x)\,dw(\mathbf x),
\end{equation}
where
\[
    c_k=\int_{\mathbb R^N}
        e^{-\|\mathbf x\|^2/2}\,dw(\mathbf x).
\]
Originally defined on $L^1(dw)$, it extends to a unitary
operator on $L^2(dw)$ and preserves the Schwartz space
$\mathcal S(\mathbb R^N)$; see \cite{deJeu}. Its inverse
is given by
\[
    \mathcal F^{-1}g(\mathbf x)
    =c_k^{-1}\int_{\mathbb R^N}
        E(i\xi,\mathbf x)g(\xi)\,dw(\xi).
\]

For $s\ge0$, we use the classical Sobolev norm
\[
    \|m\|_{W_2^s}
    =\bigl\|\widehat m(\mathbf x)
        (1+\|\mathbf x\|)^s\bigr\|_{L^2(d\mathbf x)},
\]
where
\[
    \widehat m(\mathbf x)
    =\int_{\mathbb R^N}
        e^{-i\langle\mathbf x,\xi\rangle}m(\xi)\,d\xi
\]
is the classical Fourier transform.

The metric measure space
$(\mathbb R^N,\|\mathbf x-\mathbf y\|,dw)$ is doubling;
see \eqref{eq:doubling}. For $0<p\le1$, let
$H^p_{\mathrm{Dunkl}}$ denote the Hardy space associated
with the Dunkl heat semigroup, defined through the
conical square function in Section~\ref{sec:Hardy}.
We denote its defining subspace in $L^2(dw)$ by
$\mathbb H^p_{\mathrm{Dunkl}}$. The operator atomic
decomposition and molecular characterization of these
spaces established in \cite{DzHL_atom} will be
essential to our argument.

A recent advance was obtained by Chang, Li, Wen, and Wu
\cite{CLWW}. They proved a H\"ormander multiplier theorem
for arbitrary finite reflection groups at the threshold
$s>\mathbf N/2$, including strong $L^q(dw)$ estimates for
$1<q<\infty$ and weak type $(1,1)$. Their argument is
based on a uniform estimate for the Dunkl kernel which
remains valid when the spectral parameter approaches
intersections of reflecting hyperplanes.

The purpose of this paper is to prove a H\"ormander
multiplier theorem on $H^p_{\mathrm{Dunkl}}$, $0<p \leq 1$,  under the smoothness
condition
\[
    s>\mathbf N\left(\frac1p-\frac12\right).
\]
This improves the sufficient regularity
$s>\mathbf N/p$ obtained in \cite{DzHL_atom}.
In particular, for $p=1$ the new condition becomes
$s>\mathbf N/2$. No radiality or $G$-invariance of the
multiplier is required.

\begin{theorem}\label{teo:teo_main}
Let $0<p\le1$, and let
$\psi\in C_c^\infty(\mathbb R^N\setminus\{0\})$
be a nonzero radial function. Suppose that a measurable
function $m$ on $\mathbb R^N$ satisfies
\begin{equation}\label{eq:assumption}
    M=\sup_{t>0}
        \|\psi(\cdot)m(t\cdot)\|_{W_2^s}<\infty
\end{equation}
for some
\[
    s>\mathbf N\left(\frac1p-\frac12\right).
\]
Then the Dunkl multiplier
\[
    \mathcal T_mf=\mathcal F^{-1}(m\mathcal Ff),
\]
initially considered on $\mathbb H^p_{\mathrm{Dunkl}}$,
has a unique bounded extension to
$H^p_{\mathrm{Dunkl}}$. Moreover,
\[
    \|\mathcal T_mf\|_{H^p_{\mathrm{Dunkl}}}
    \le CM\|f\|_{H^p_{\mathrm{Dunkl}}},
\]
where $C$ is independent of $m$ and $f$.
\end{theorem}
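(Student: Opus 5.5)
The approach is the operator atomic decomposition of \cite{DzHL_atom} combined with a molecular estimate in the tent-space/square-function norm. By that result, every $f\in\mathbb H^p_{\mathrm{Dunkl}}$ can be written as $f=\sum_j\lambda_j a_j$ with $\sum_j|\lambda_j|^p\lesssim\|f\|^p_{H^p_{\mathrm{Dunkl}}}$. Each atom has the form $a=\Delta_k^{L}b$ (or $a=(t^2\Delta_k)^{L}e^{t^2\Delta_k}$ applied to a function), with $b$ supported in a $G$-orbit neighbourhood $\mathcal O(B)$ of a ball $B=B(\mathbf y,r)$. Here $\|(r^2\Delta_k)^{j}b\|_{L^2(dw)}\le r^{2L}w(B)^{1/2-1/p}$ for $0\le j\le L$, and $L$ is chosen large depending on $p$ and $\mathbf N$.

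Since $\mathcal T_m$ commutes with $\Delta_k$ and is bounded on $L^2(dw)$ with norm $\|m\|_\infty\lesssim M$, it suffices to prove the uniform bound
\[
\|\mathcal T_m a\|_{H^p_{\mathrm{Dunkl}}}\le CM
\]
over atoms. A density and uniqueness argument then gives the extension, since $H^p$ quasi-norms are $p$-subadditive. By the molecular characterization of \cite{DzHL_atom}, the bound follows once $\mathcal T_m a$ is a molecule. Concretely, I will show that
\begin{equation}\label{eq:molgoal}
\|\mathcal T_m(r^2\Delta_k)^{j}b\|_{L^2(\mathcal O(B_i)\setminus \mathcal O(B_{i-1}),dw)}\lesssim M\,2^{-i\varepsilon}\,r^{2L}w(B_{i})^{1/2-1/p}
\end{equation}
for some $\varepsilon>0$, where $B_i=2^iB$. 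Equivalently, I can bound the conical square function of $\mathcal T_m a$ directly on the annuli $\mathcal O(B_i)\setminus\mathcal O(B_{i-1})$, and sum in $i$ using Hölder's inequality with exponents $2/p$, $2/(2-p)$.

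To get \eqref{eq:molgoal}, decompose $m=\sum_{n\in\mathbb Z}m_n$ with $m_n(\xi)=m(\xi)\psi_0(2^{-n}\xi)$ for a radial dyadic partition of unity, and write $\mathcal T_m=\sum_n\mathcal T_{m_n}$. The key tool is the weighted Plancherel estimate of \cite{CLWW}, which I take as available. It says that for $g$ supported in $\{\|\xi\|\sim 2^n\}$,
\[
\int\bigl|\mathcal F^{-1}(g(\cdot))\ast_k\bigr|^2(1+2^n d(\mathbf x,\mathbf y))^{2s}\,dw \lesssim w(B(\mathbf y,2^{-n}))^{-1}\|g(2^n\cdot)\|_{W_2^s}^2,
\]
where $d$ is the orbit distance. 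This controls Dunkl convolution kernels $K_n(\mathbf x,\mathbf y)$ of $\mathcal T_{m_n}$ with the Sobolev exponent $s$ rather than $s-\delta$.

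There are two regimes. For $2^n r\ge1$ (high frequencies), I use $b$ together with the factor $(r^2\Delta_k)^{j}$, which contributes $(2^nr)^{-2(L-j)}$ after absorbing into $m_n$. For $2^nr<1$ (low frequencies), I use $a=\Delta_k^L b$, which contributes $(2^nr)^{2L}$. Away from $\mathcal O(B_i)$, the weight gives the decay $(2^{n}2^ir)^{-s}$. Schur or Cauchy--Schwarz against $\|b\|_{L^2}$ then yields a bound $\min\bigl((2^nr)^{2L},(2^nr)^{-2L'}\bigr)(2^n2^ir)^{-s}$ times volume factors $(w(B)/w(B(\mathbf y,2^{-n})))^{1/2}$. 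Doubling converts these volume factors into powers of $2^nr$ between exponents $N/2$ and $\mathbf N/2$.

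The condition $s>\mathbf N(1/p-1/2)$ is used exactly in summing over $i$. The annulus factor $2^{-is}$ must beat the growth
\[
w(B_i)^{1/p-1/2}/w(B)^{1/p-1/2}\lesssim 2^{i\mathbf N(1/p-1/2)},
\]
while summation in $n$ is handled by the large $L$.

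I expect the main obstacle to be the genuinely Dunkl feature that kernel estimates are only in terms of the orbit distance $d(\mathbf x,\mathbf y)$, whereas atoms and measures use Euclidean balls. Moreover, the volume $w(B(\mathbf x,\rho))$ is not comparable across orbits at the precise exponent needed. To avoid losing $s>\mathbf N/p$ (the earlier result), I would work entirely in $L^2$ on orbit-annuli $\mathcal O(B_i)$, never passing to pointwise kernel bounds. This keeps the sharp Plancherel-type $L^2$ estimate, applied uniformly near reflecting hyperplanes via \cite{CLWW}. I then pass from these $L^2$ annular bounds to the $H^p$ norm through the square-function/tent-space molecular criterion, where the factor $w(\mathcal O(B_i))\sim w(B_i)$ is harmless since $|G|$ is finite.
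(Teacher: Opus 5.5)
There is a genuine gap in your main route. Your molecular target requires, for every $0\le j\le L$ and in particular for $j=0$, that $\|\mathcal T_m(r^2\Delta_k)^jb\|_{L^2(U_i(B))}\lesssim M2^{-i\varepsilon}r^{2L}w(2^iB)^{1/2-1/p}$, with atoms and molecules of the \emph{same} order $L$.

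In the low-frequency regime $2^nr<1$ you say you ``use $a=\Delta_k^Lb$'' to gain $(2^nr)^{2L}$. That gain is available only when the operator acts on $a$, i.e.\ for $j=L$. For $\mathcal T_{m_n}(r^2\Delta_k)^jb=\pm r^{2j}\mathcal T_{m_n\|\cdot\|^{2j}}b$ the gain is only $(2^nr)^{2j}$, and for $j=0$ there is none. Your bound for the single piece with $2^{-n}\approx 2^ir$ is then of order $r^{2L}w(B)^{1-1/p}w(2^iB)^{-1/2}$. This exceeds the target by $2^{i\varepsilon}\bigl(w(2^iB)/w(B)\bigr)^{1/p-1}\to\infty$.

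This is not merely a lossy estimate. Take $k\equiv0$, $m(\xi)=\xi_1/\|\xi\|$ (admissible for every $s$), $B=B(0,r)$, and a bump $b$ with $\int b\neq0$. Then $\mathcal T_mb$ behaves like $c\,x_1\|\mathbf x\|^{-N-1}\int b$ at infinity, so the $j=0$ condition fails for every $\varepsilon>0$. Since $\Delta_k^L$ is injective on $L^2(dw)$, $\mathcal T_mb$ is the only admissible $b$-function. Hence $\mathcal T_ma$ is genuinely not an order-$L$ molecule.

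The paper avoids this by decomposing with atoms of order $2J$. It shows that $\mathcal T_m\boldsymbol a=\Delta_k^J\boldsymbol b'$, with $\boldsymbol b'=\mathcal T_m\Delta_k^J\boldsymbol b$, is $CM$ times a molecule of order $J$. For every $n=0,\dots,J$, the spare $J$ Laplacians give the identity \eqref{eq:new_low_frequency_identity} with $g_n=(r^2\Delta_k)^n\boldsymbol b$ and $\|g_n\|_1\lesssim r^{4J}w(B)^{1-1/p}$. This yields the factor $(\lambda_\ell r)^{2J}(1+2^j\lambda_\ell r)^{-\sigma}$, which is summable because $2J>\sigma$. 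The high-frequency part is as in your sketch; it needs only $\sigma>\mathbf N/2$ and no extra decay from $b$.

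Your parenthetical alternative, estimating $S(\mathcal T_ma)$ directly on the orbit annuli, would also close the gap. It is not, however, equivalent to the molecular condition. In the proof of Proposition~\ref{prop:m_in_Hp}, the $b$-function enters only through its global $L^2$ norm in the large-time part, and for that $\|\mathcal T_mb\|_{L^2(dw)}\le CM\|b\|_{L^2(dw)}$ suffices. Annular decay is needed only for $\mathcal T_ma$ itself, where your $(2^nr)^{2L}$ device is legitimate. On that route you must rerun the square-function argument rather than cite the molecular criterion.

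Incidentally, $w$ is $G$-invariant, so ball volumes coincide across orbits. The volume comparison you need is just \eqref{eq:new_w(B)olume_comparison}.
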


The smoothness condition in Theorem~\ref{teo:teo_main}
is the natural Hardy-space analogue of the classical
$L^2$ Sobolev threshold. In the Euclidean setting, the
classical H\"ormander multiplier theorem requires
Sobolev regularity greater than one half of the
dimension; see \cite{Hormander1960}.

\begin{remark}\label{rem:any_eta}
\normalfont
The condition \eqref{eq:assumption} is independent of
the choice of the nonzero radial cutoff. More precisely,
if it holds for $\psi$, then for every radial
$\eta\in C_c^\infty(\mathbb R^N\setminus\{0\})$,
\begin{equation}\label{eq:any_function}
    \sup_{t>0}
        \|\eta(\cdot)m(t\cdot)\|_{W_2^s}
    \le C_{\psi,\eta,s}M.
\end{equation}
This follows from a finite partition on the support
of $\eta$, using dilates of a region on which $\psi$
does not vanish, and boundedness of multiplication
by smooth compactly supported functions on $W_2^s$.
In particular, since $s>N/2$, Sobolev embedding gives
$\|m\|_{L^\infty}\le CM$, so $\mathcal T_m$ is well
defined and bounded on $L^2(dw)$.
\end{remark}

For $p=1$, Theorem~\ref{teo:teo_main} gives boundedness
on the intrinsic Dunkl Hardy space at the regularity
threshold $s>\mathbf N/2$. Earlier H\"ormander multiplier
results for the Dunkl transform were obtained in
\cite{DzH-JFA}; see also \cite{DzHL_atom} for the
corresponding multiplier theorem on
$H^p_{\mathrm{Dunkl}}$ under a stronger smoothness
condition. For other recent
multiplier results under modified H\"ormander conditions,
including estimates for radial inputs, see Mukherjee and
Thangavelu \cite{Thangavelu2}.

The purpose of the present note is to record the
observation that the kernel estimates and the dyadic
argument of \cite{CLWW} also lead to boundedness on the
intrinsic Hardy spaces $H^p_{\mathrm{Dunkl}}$,
$0<p\le1$, when they are combined with the operator
atomic decomposition and the molecular criterion
established in \cite{DzHL_atom}. Thus the analytic input
of our proof is provided by \cite{CLWW}; our argument
consists in placing their estimates into the atomic and
molecular framework of \cite{DzHL_atom}. In this sense,
Theorem~\ref{teo:teo_main} may be viewed as a
Hardy-space consequence of the results of
\cite{CLWW}.

We now describe more precisely the estimates used in
this observation. The first one is the joint
Dunkl-kernel estimate obtained in
\cite[Proposition 1.2]{CLWW}. Set
\begin{equation}\label{eq:omega_Omega}
    w(\xi)
    =\prod_{\alpha\in R}
        |\langle\xi,\alpha\rangle|^{k(\alpha)},
    \qquad
    \Omega(\mathbf y)
    =\prod_{\alpha\in R}
        \bigl(1+|\langle\mathbf y,\alpha\rangle|\bigr)^{k(\alpha)}.
\end{equation}
For every fixed compact annulus
$\mathcal A\subset\mathbb R^N\setminus\{0\}$,
the estimate of \cite{CLWW} gives
\begin{equation}\label{eq:joint_kernel_input}
    w(\xi)|E(-i\xi,\mathbf y)|^2
    \le C_{\mathcal A}\Omega(\mathbf y)^{-1},
    \qquad
    \xi\in\mathcal A\setminus
        \bigcup_{\alpha\in R}\alpha^\perp,
    \quad \mathbf y\in\mathbb R^N.
\end{equation}
Moreover,
\begin{equation}\label{eq:Omega_ball_comparison}
    \Omega(\mathbf y)\asymp w(B(\mathbf y,1)),
    \qquad \mathbf y\in\mathbb R^N.
\end{equation}
Thus \eqref{eq:joint_kernel_input} provides the
appropriate volume normalization for $L^2$ estimates of
localized multiplier kernels. The estimate remains
uniform as the spectral variable approaches the
reflecting hyperplanes.

Proposition~4.3 of \cite{CLWW} combines
\eqref{eq:joint_kernel_input} with the relevant
integration-by-parts identities and gives weighted
$L^2$ estimates for multiplier kernels whose symbols
are supported in a fixed annulus. We use this proposition
as stated in \cite{CLWW}. The only additional step needed
here is to write its estimate at the dyadic scale relevant
to the multiplier decomposition.

More precisely, let $\phi$ be a smooth radial annular
cutoff forming a dyadic partition of unity, and set
\[
    m_\ell(\xi)=m(\xi)\phi(2^{-\ell}\xi).
\]
If $\mathscr K_\ell$ denotes the integral kernel of
$\mathcal T_{m_\ell}$, then, for $0\le\sigma\le s$,
\[
    \bigl\|(1+2^\ell d(\cdot,\mathbf y))^\sigma
        \mathscr K_\ell(\cdot,\mathbf y)\bigr\|_{L^2(dw)}
    \le CM\,w(B(\mathbf y,2^{-\ell}))^{-1/2}.
\]
The constant is independent of $\ell$ and $\mathbf y$.
This is the rescaled form of the annular estimate from
\cite[Proposition 4.3]{CLWW}; the scaling is recorded in
Lemma~\ref{lem:weighted_annular_kernel} and
Corollary~\ref{cor:weighted_dyadic_kernel}.

It remains to observe that these weighted kernel
estimates fit the operator molecular framework developed
in \cite{DzHL_atom}. For completeness, recall that the generalized Dunkl
translations are defined, initially for Schwartz
functions, by
\begin{equation}\label{eq:translation}
    \tau_{\mathbf x}f(\mathbf y)
    =c_k^{-1}\int_{\mathbb R^N}
        E(i\xi,\mathbf x)E(i\xi,\mathbf y)
        \mathcal Ff(\xi)\,dw(\xi).
\end{equation}
They extend to contractions on $L^2(dw)$.

The paper is organized as follows. In
Section~\ref{sec:preliminaries}, we recall the basic
facts from Dunkl theory needed in the sequel. In
Section~\ref{sec:Hardy}, we present the operator atomic
decomposition and molecular characterization of the
spaces $H^p_{\mathrm{Dunkl}}$. Section~\ref{sec:weighted}
is devoted to weighted $L^2$ estimates for dyadically
localized multiplier kernels. Finally, in
Section~\ref{sec:Hormander}, we combine these estimates
with the atomic and molecular theory to prove the main
H\"ormander multiplier theorem.

\section{Preliminaries: Dunkl theory}\label{sec:preliminaries}

We retain the notation introduced in Section~\ref{sec:intro}.
For background on Dunkl operators and the Dunkl transform,
we refer to \cite{Dunkl,Roesler3,Roesler-Voit,deJeu}.

\subsection{The measure and the orbit distance}

The roots are normalized by $\|\alpha\|^2=2$, and
\[
    \sigma_\alpha(\mathbf x)
    =\mathbf x
       -2\frac{\langle\mathbf x,\alpha\rangle}
                {\|\alpha\|^2}\alpha,
    \qquad \alpha\in R.
\]
The measure $dw$ is $G$-invariant and homogeneous:
\begin{equation}\label{eq:t_ball}
    w(B(t\mathbf x,tr))
    =t^{\mathbf N}w(B(\mathbf x,r)),
    \qquad t,r>0.
\end{equation}
The measures of balls satisfy
\begin{equation}\label{eq:balls_asymp}
    w(B(\mathbf x,r))
    \asymp
    r^N\prod_{\alpha\in R}
       \bigl(|\langle\mathbf x,\alpha\rangle|+r\bigr)^{k(\alpha)}.
\end{equation}
In particular,
\begin{equation}\label{eq:growth}
    w(B(\mathbf x,\rho))
    \le C\left(\frac{\rho}{r}\right)^{\mathbf N}
       w(B(\mathbf x,r)),
    \qquad 0<r\le\rho,
\end{equation}
and
\begin{equation}\label{eq:doubling}
    w(B(\mathbf x,2r))\le Cw(B(\mathbf x,r)).
\end{equation}

For a set $A\subseteq\mathbb R^N$, write
\[
    \mathcal O(A)=\bigcup_{g\in G}gA.
\]
The orbit distance is
\begin{equation}\label{eq:distance_of_orbits}
    d(\mathbf x,\mathbf y)
    =\min_{g\in G}\|\mathbf x-g(\mathbf y)\|.
\end{equation}
It is symmetric, satisfies the triangle inequality,
and is $G$-invariant in each variable. Moreover,
\[
    d(t\mathbf x,t\mathbf y)=t\,d(\mathbf x,\mathbf y),
    \qquad t>0.
\]
For open balls,
\[
    \{\mathbf y:d(\mathbf x,\mathbf y)<r\}
    =\mathcal O(B(\mathbf x,r)),
\]
and
\begin{equation}\label{eq:orbit_ball_measure}
    w(B(\mathbf x,r))
    \le w(\mathcal O(B(\mathbf x,r)))
    \le |G|w(B(\mathbf x,r)).
\end{equation}
The same statements hold for closed balls with the
corresponding non-strict inequalities; this distinction
does not affect the integral estimates below.

We shall also use the consequence of doubling and
$G$-invariance that
\begin{equation}\label{eq:nearby_ball_comparison}
    w(B(\mathbf x,r))\asymp w(B(\mathbf y,r))
    \qquad\text{if }d(\mathbf x,\mathbf y)\le r.
\end{equation}

\subsection{Dunkl operators and transform identities}

For $j=1,\ldots,N$, the coordinate Dunkl operators are
\begin{equation}\label{eq:T_xi}
    T_jf(\mathbf x)
    =\partial_jf(\mathbf x)
       +\frac12\sum_{\alpha\in R}
          k(\alpha)\alpha_j
          \frac{f(\mathbf x)-f(\sigma_\alpha\mathbf x)}
               {\langle\alpha,\mathbf x\rangle}.
\end{equation}
They commute and preserve $\mathcal S(\mathbb R^N)$.
Their integration-by-parts formula is
\begin{equation}\label{eq:Dunkl_integration_by_parts}
    \int_{\mathbb R^N}(T_jf)(\mathbf x)g(\mathbf x)
       \,dw(\mathbf x)
    =
    -\int_{\mathbb R^N}f(\mathbf x)(T_jg)(\mathbf x)
       \,dw(\mathbf x),
\end{equation}
for $f,g\in\mathcal S(\mathbb R^N)$, and also for
smooth functions when one factor is compactly
supported. For $f,g \in C^1(\mathbb{R}^N)$, we have the following Leibniz-type rule
\begin{equation}\label{eq:general_Leibniz}
    T_j(fg)(\mathbf{x})=(T_jf)(\mathbf{x})g(\mathbf{x})+f(\mathbf{x})\partial_{j}g(\mathbf{x})+\sum_{\alpha \in R}\frac{k(\alpha)}{2} \langle\alpha,e_j\rangle f(\sigma_{\alpha}(\mathbf{x}))\frac{g(\mathbf{x})-g(\sigma_{\alpha}(\mathbf{x}))}{\langle \mathbf{x}, \alpha \rangle}.
\end{equation}

The Dunkl kernel satisfies
\begin{equation}\label{eq:eigenfunction_identity}
    T_j^{\mathbf x}E(\mathbf x,\mathbf z)
    =z_jE(\mathbf x,\mathbf z),
\end{equation}
as well as:
\begin{equation}\label{eq:Dunkl_kernel_identities}
\begin{split}
    E(\mathbf z,\mathbf v)&=E(\mathbf v,\mathbf z),\\
    E(g\mathbf z,g\mathbf v)&=E(\mathbf z,\mathbf v),
       \qquad g\in G,\\
    E(t\mathbf z,\mathbf v)&=E(\mathbf z,t\mathbf v),
       \qquad t>0.
\end{split}
\end{equation}
For real $\xi,\mathbf x$,
\begin{equation}\label{eq:E}
    |E(i\xi,\mathbf x)|\le1.
\end{equation}

With the normalization in \eqref{eq:DunklTransform},
Plancherel's theorem reads
\begin{equation}\label{eq:Plancherel}
    \|\mathcal Ff\|_{L^2(dw)}=\|f\|_{L^2(dw)}.
\end{equation}
For Schwartz functions,
\begin{equation}\label{eq:T_j_transform}
    \mathcal F(T_jf)(\xi)=i\xi_j\mathcal Ff(\xi),
\end{equation}
and
\begin{equation}\label{eq:inverse_coordinate_identity}
    x_j\mathcal F^{-1}a(\mathbf x)
    =\mathcal F^{-1}(iT_ja)(\mathbf x).
\end{equation}
These identities and
\eqref{eq:Dunkl_integration_by_parts} will be used
to obtain weighted estimates for the multiplier
kernels.

The Dunkl Laplacian is
\begin{equation}\label{eq:Dunkl_laplacian_form}
    \Delta_k=\sum_{j=1}^N T_j^2.
\end{equation}
Consequently,
\begin{equation}\label{eq:Laplacian_on_Fourier_side}
    \mathcal F(\Delta_kf)(\xi)
    =-\|\xi\|^2\mathcal Ff(\xi).
\end{equation}
We use its self-adjoint realization on $L^2(dw)$
given by this multiplier identity. The domains of
its powers are recalled in \eqref{eq:domain}.

\subsection{The heat semigroup and square-function kernels}
\label{sec:Qt}

The Dunkl heat semigroup $H_t=e^{t\Delta_k}$ satisfies
\begin{equation}\label{eq:heat_transform}
    \mathcal F(H_tf)(\xi)
    =e^{-t\|\xi\|^2}\mathcal Ff(\xi),
    \qquad f\in L^2(dw).
\end{equation}
Let $Q_t(\mathbf x,\mathbf y)$ denote the integral
kernel of $t^2\Delta_kH_{t^2}$, so that
\[
    t^2\Delta_kH_{t^2}f(\mathbf x)
    =\int_{\mathbb R^N}
       Q_t(\mathbf x,\mathbf y)f(\mathbf y)\,dw(\mathbf y).
\]
The kernel is symmetric:
\[
    Q_t(\mathbf x,\mathbf y)=Q_t(\mathbf y,\mathbf x).
\]
The heat-kernel derivative estimates imply
\begin{equation}\label{eq:Qt-bound}
    |Q_t(\mathbf x,\mathbf y)|
    \le
    \frac{C}{w(B(\mathbf x,t))}
       \exp\left(-c\frac{d(\mathbf x,\mathbf y)^2}{t^2}\right),
    \qquad t>0;
\end{equation}
see \cite[equation (2.30)]{DzHL_atom}.
This is the only pointwise heat-kernel estimate
needed in the molecular argument.

We also record the uniform spectral bound
\begin{equation}\label{eq:Q_operator_bounded}
    \bigl\|t^{2J}\Delta_k^J
       (t^2\Delta_kH_{t^2})\bigr\|_{L^2(dw)\to L^2(dw)}
    \le C_J,
    \qquad t>0,\quad J\in\mathbb N_0.
\end{equation}
Indeed, by \eqref{eq:heat_transform} and
\eqref{eq:Laplacian_on_Fourier_side}, its multiplier is
\[
    (-1)^{J+1}
       (t\|\xi\|)^{2J+2}e^{-t^2\|\xi\|^2}.
\]
Thus \eqref{eq:Q_operator_bounded} follows from
Plancherel's theorem and boundedness of
$u^{2J+2}e^{-u^2}$ on $[0,\infty)$.

\section{Preliminaries: Hardy spaces}\label{sec:Hardy}

In this section we recall the operator atomic and
molecular results from \cite{DzHL_atom} that will
be used in the proof of Theorem~\ref{teo:teo_main}. The integer describing the order of an atom
or molecule will be denoted by $J$, to distinguish it
from the constant $M$ in \eqref{eq:assumption}.

\subsection{The square function and the Hardy spaces}

Let $H_t=e^{t\Delta_k}$ be the Dunkl heat semigroup.
For $f\in L^2(dw)$, define
\begin{equation}\label{eq:square_conic}
    Sf(\mathbf x)
    =
    \left(
       \int_0^\infty
       \int_{\|\mathbf x-\mathbf y\|<t}
       |t^2\Delta_kH_{t^2}f(\mathbf y)|^2
       \frac{dw(\mathbf y)}{w(B(\mathbf x,t))}
       \frac{dt}{t}
    \right)^{1/2}.
\end{equation}
Following \cite[Definition 1.1 and
equations (1.1)--(1.2)]{DzHL_atom}, for $0<p\le1$
we set
\[
    \mathbb H^p_{\mathrm{Dunkl}}
    =
    \{f\in L^2(dw):Sf\in L^p(dw)\},
    \qquad
    \|f\|_{\mathbb H^p_{\mathrm{Dunkl}}}
    =\|Sf\|_{L^p(dw)}.
\]
The space $H^p_{\mathrm{Dunkl}}$ is the completion of
$\mathbb H^p_{\mathrm{Dunkl}}$ in this quasi-norm.

We record two elementary properties of $S$ that are
needed when passing from estimates on atoms to
estimates on the whole space:
\begin{equation}\label{eq:Hardy_square_difference}
    |Sf-Sg|\le S(f-g)
\end{equation}
and
\begin{equation}\label{eq:Hardy_square_L2}
    \|Sf\|_{L^2(dw)}\asymp\|f\|_{L^2(dw)}.
\end{equation}
The first follows from the reverse triangle inequality
in the Hilbert space defining the cone integral.
For the second, Fubini's theorem and doubling give
\[
    \|Sf\|_{L^2(dw)}^2
    \asymp
    \int_0^\infty
       \|t^2\Delta_kH_{t^2}f\|_{L^2(dw)}^2\,\frac{dt}{t}.
\]
Indeed, for fixed $\mathbf y,t$,
\[
    \int_{\|\mathbf x-\mathbf y\|<t}
       \frac{dw(\mathbf x)}{w(B(\mathbf x,t))}
    \asymp1.
\]
Plancherel's theorem and
\[
    \mathcal F(t^2\Delta_kH_{t^2}f)(\xi)
    =-t^2\|\xi\|^2e^{-t^2\|\xi\|^2}\mathcal Ff(\xi)
\]
then prove \eqref{eq:Hardy_square_L2}, since
\[
    \int_0^\infty
       t^4\|\xi\|^4e^{-2t^2\|\xi\|^2}\,\frac{dt}{t}
    =\frac18,
    \qquad \xi\ne0.
\]

In particular, for $0<p\le1$,
\begin{equation}\label{eq:Hardy_p_subadditivity}
    \|f+g\|_{\mathbb H^p_{\mathrm{Dunkl}}}^p
    \le
    \|f\|_{\mathbb H^p_{\mathrm{Dunkl}}}^p
    +\|g\|_{\mathbb H^p_{\mathrm{Dunkl}}}^p.
\end{equation}
Thus $\|f-g\|_{\mathbb H^p_{\mathrm{Dunkl}}}^p$
defines a metric.

The completion can also be identified with a space
of tempered distributions, using the natural pairing
\[
    \langle f,\varphi\rangle
    =\int_{\mathbb R^N}f(\mathbf x)\varphi(\mathbf x)
       \,dw(\mathbf x).
\]
The existence of this identification and its
injectivity follow from
\cite[Corollaries 4.12--4.13 and
Proposition 4.14]{DzHL_atom}.
For the multiplier argument below, however, it is
enough to work first on
$\mathbb H^p_{\mathrm{Dunkl}}$ and then pass to its
completion.

\subsection{Operator atoms and atomic decompositions}

For a positive integer $J$, recall that
\begin{equation}\label{eq:domain}
    \mathcal D(\Delta_k^J)
    =
    \left\{
       f\in L^2(dw):
       \int_{\mathbb R^N}
          \|\xi\|^{4J}|\mathcal Ff(\xi)|^2\,dw(\xi)
       <\infty
    \right\};
\end{equation}
see \cite[equation (2.28)]{DzHL_atom}.
On this domain,
\[
    \mathcal F(\Delta_k^Jf)(\xi)
    =(-1)^J\|\xi\|^{2J}\mathcal Ff(\xi).
\]

\begin{definition}[\cite{DzHL_atom}, Definition 4.1]
\label{def:operator_atom}
Let $0<p\le1$ and let $J$ be an integer satisfying
\[
    J>\frac{\mathbf N(2-p)}{4p}.
\]
A function $\boldsymbol a$ is a
$(p,2,J,\Delta_k)$-atom associated with a ball
$B=B(\mathbf x_0,r)$ if there exists
$\boldsymbol b\in\mathcal D(\Delta_k^J)$ such that
\begin{equation}\label{eq:Hardy_atom_representation}
    \boldsymbol a=\Delta_k^J\boldsymbol b,
\end{equation}
\begin{equation}\label{eq:Hardy_atom_support}
    \operatorname{supp}\boldsymbol b\subseteq\mathcal O(B),
\end{equation}
and
\begin{equation}\label{eq:Hardy_atom_size}
    \|(r^2\Delta_k)^n\boldsymbol b\|_{L^2(dw)}
    \le r^{2J}w(B)^{1/2-1/p},
    \qquad n=0,\ldots,J.
\end{equation}
\end{definition}

Since $\mathcal O(B)$ is $G$-invariant, Dunkl
differentiation preserves distributional support
in this set. Consequently,
\[
    \operatorname{supp}(\Delta_k^n\boldsymbol b)
    \subseteq\mathcal O(B),
    \qquad n=0,\ldots,J.
\]
In particular,
\begin{equation}\label{eq:Hardy_atom_L2_size}
    \operatorname{supp}\boldsymbol a\subseteq\mathcal O(B),
    \qquad
    \|\boldsymbol a\|_{L^2(dw)}\le w(B)^{1/2-1/p}.
\end{equation}

The precise atomic decomposition needed below is
the following $L^2$ version.

\begin{proposition}[\cite{DzHL_atom}, Proposition 4.11]
\label{teo:operator_atomic_decomposition}
Let $0<p\le1$ and let
$J>\mathbf N(2-p)/(4p)$ be an integer.
For every $f\in\mathbb H^p_{\mathrm{Dunkl}}$, there
exist $(p,2,J,\Delta_k)$-atoms $\boldsymbol a_\nu$
and coefficients $c_\nu\in\mathbb C$ such that
\begin{equation}\label{eq:Hardy_atomic_decomposition}
    f=\sum_{\nu=1}^\infty c_\nu\boldsymbol a_\nu
    \quad\text{in }L^2(dw),
\end{equation}
and
\begin{equation}\label{eq:Hardy_atomic_coefficients}
    \sum_{\nu=1}^\infty|c_\nu|^p
    \le C\|f\|_{\mathbb H^p_{\mathrm{Dunkl}}}^p.
\end{equation}
The series also converges in $\mathcal S'(\mathbb R^N)$.
The constant depends only on the fixed parameters
and is independent of $f$.
\end{proposition}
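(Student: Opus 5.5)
The plan is to prove the decomposition with a Calderón reproducing formula, factoring through the tent space $T^p_2$ built on the doubling space $(\mathbb R^N,\|\cdot\|,dw)$. The tent-space decomposition supplies the cancellation-free atoms. Finite propagation speed for the Dunkl wave equation, which I take as an external input, supplies the support condition \eqref{eq:Hardy_atom_support}.

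\emph{Step 1 (reproducing formula).} Fix an even function $\Phi\in\mathcal S(\mathbb R)$ whose Fourier (cosine) transform is supported in $[-1,1]$ and with $\Phi(0)\neq0$. Set $\Psi(u)=u^{2J}\Phi(u)$, and use the multiplier $-u^2e^{-u^2}$ of $t^2\Delta_kH_{t^2}$. Then
\[
c_\Psi=\int_0^\infty \Psi(u)\,(-u^2e^{-u^2})\,\frac{du}{u}
\]
is finite. We may assume $c_\Psi\ne0$, replacing $\Phi$ by $\Phi^2$ if needed so that the integrand has a fixed sign. By Plancherel \eqref{eq:Plancherel} and \eqref{eq:heat_transform}, for $f\in L^2(dw)$ we get
\[
f=c_\Psi^{-1}\int_0^\infty \Psi\bigl(t\sqrt{-\Delta_k}\bigr)\bigl(t^2\Delta_kH_{t^2}f\bigr)\,\frac{dt}{t}
\]
in $L^2(dw)$. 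Here $\Psi(t\sqrt{-\Delta_k})=(-1)^J t^{2J}\Delta_k^J\Phi(t\sqrt{-\Delta_k})$.

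\emph{Step 2 (tent space).} Put $F(\mathbf y,t)=t^2\Delta_kH_{t^2}f(\mathbf y)$. By definition, $f\in\mathbb H^p_{\mathrm{Dunkl}}$ means $F\in T^p_2$, with $\|F\|_{T^p_2}=\|Sf\|_{L^p(dw)}$. By \eqref{eq:Hardy_square_L2} we also have $F\in T^2_2$. I then invoke the Coifman--Meyer--Stein atomic decomposition of $T^p_2$, run on the space of homogeneous type $(\mathbb R^N,\|\cdot\|,dw)$ via doubling \eqref{eq:doubling}. It gives $F=\sum_\nu\lambda_\nu A_\nu$, where each $A_\nu$ is supported in the tent $\widehat{B_\nu}$ over a ball $B_\nu$, satisfies
\[
\iint |A_\nu|^2\,dw\,\frac{dt}{t}\le w(B_\nu)^{1-2/p},
\]
and the coefficients obey $\sum|\lambda_\nu|^p\le C\|F\|_{T^p_2}^p$. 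Because $F\in T^2_2\cap T^p_2$, the standard construction via level sets of the cone function $\mathcal A F$ and Whitney balls also gives convergence of the series in $T^2_2$.

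\emph{Step 3 (atoms).} Define
\[
\boldsymbol b_\nu=c_\Psi^{-1}(-1)^J\int_0^{r_\nu}t^{2J}\Phi\bigl(t\sqrt{-\Delta_k}\bigr)A_\nu(\cdot,t)\,\frac{dt}{t}
\]
and $\boldsymbol a_\nu=\Delta_k^J\boldsymbol b_\nu$, with $c_\nu=\lambda_\nu$; the integral stops at $r_\nu$ since $A_\nu$ lives in the tent, where $t\lesssim r_\nu$.
\begin{itemize}
\item \emph{Support.} Finite propagation speed for $\cos(s\sqrt{-\Delta_k})$ in the orbit distance $d$ shows that the kernel of $\Phi(t\sqrt{-\Delta_k})$ vanishes when $d(\mathbf x,\mathbf y)>t$. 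Hence $\operatorname{supp}\boldsymbol b_\nu\subseteq\mathcal O(2B_\nu)$, and doubling lets us rename $2B_\nu$ as $B_\nu$.
\item \emph{Size.} The bounds \eqref{eq:Hardy_atom_size} follow by duality against $g\in L^2(dw)$. For $n\le J$ we need to control $\|(r^2\Delta_k)^n\boldsymbol b_\nu\|_{L^2(dw)}$. Pairing with $g$ and using Cauchy--Schwarz in $(\mathbf y,t)$, it is bounded by
\[
r^{2n}\Bigl(\iint|A_\nu|^2\,dw\,\frac{dt}{t}\Bigr)^{1/2}\Bigl(\int_0^r t^{4J-4n}\bigl\|(t^2\Delta_k)^n\Phi(t\sqrt{-\Delta_k})g\bigr\|_{L^2}^2\,\frac{dt}{t}\Bigr)^{1/2}.
\]
Since $t\le r$, the factor $t^{4J-4n}$ is at most $r^{4J-4n}$, and the remaining square function is bounded by $\|g\|_{L^2}$ via Plancherel. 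This yields the bound $r^{2J}w(B)^{1/2-1/p}$, up to a constant that is absorbed into $c_\nu$.
\end{itemize}

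\emph{Step 4 (convergence).} The map $\pi:A\mapsto c_\Psi^{-1}\int\Psi(t\sqrt{-\Delta_k})A(\cdot,t)\,dt/t$ is bounded from $T^2_2$ to $L^2(dw)$ by the same duality argument. Convergence of the tent series in $T^2_2$ therefore gives $f=\pi F=\sum c_\nu\boldsymbol a_\nu$ in $L^2(dw)$. Convergence in $\mathcal S'(\mathbb R^N)$ then follows, since $\mathcal S\subset L^2(dw)$.

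The main obstacle is the support step: it needs finite propagation speed for the Dunkl wave operator with respect to the orbit distance, which is why supports are orbits $\mathcal O(B)$ rather than balls. A second delicate point is the tent-space decomposition. Its cones use Euclidean balls while the measure is only doubling, so the Whitney-type covering of the open sets $\{\mathcal A F>2^k\}$ has to be checked. I expect this to go through by the general homogeneous-type theory. The restriction $J>\mathbf N(2-p)/(4p)$ does not seem needed for this decomposition itself; it matters only later, for the molecular estimates.
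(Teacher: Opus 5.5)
Your route is the standard operator-atom argument of \cite{Hofman}, carried over to the Dunkl setting. It combines a Calder\'on reproducing formula with a function $\Phi$ whose Fourier transform has compact support, the tent-space decomposition on the doubling space $(\mathbb R^N,\|\cdot\|,dw)$ with simultaneous convergence in $T^2_2$, finite propagation speed in the orbit distance for the supports, and the bounded map $\pi:T^2_2\to L^2(dw)$ for convergence. The paper gives no proof of this proposition; it imports the result from \cite{DzHL_atom}, and the orbit-ball support in those atoms is exactly what finite propagation speed in $d$ delivers. The structure is sound. You are also right that only $J\ge1$ is used here, to make $\int_0^\infty|\Psi(u)|^2\,du/u$ finite; the lower bound on $J$ matters only for the molecular estimates.

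One step fails as written: the size bound for $n=0$. There your remaining factor is
\[
\int_0^r\bigl\|\Phi(t\sqrt{-\Delta_k})g\bigr\|_{L^2}^2\,\frac{dt}{t}
=\int_{\mathbb R^N}|\mathcal Fg(\xi)|^2\int_0^{r\|\xi\|}|\Phi(u)|^2\,\frac{du}{u}\,dw(\xi).
\]
This is infinite for every $g\neq0$ because $\Phi(0)\neq0$. The Plancherel square-function bound needs a multiplier vanishing at the origin, which holds only for $n\ge1$.

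The repair is to keep the power of $t$ rather than discard it. For every $n<J$, use the uniform bound $\|(t^2\Delta_k)^n\Phi(t\sqrt{-\Delta_k})\|_{L^2\to L^2}\le\sup_{u\ge0}|u^{2n}\Phi(u)|$ together with $\int_0^r t^{4J-4n}\,dt/t=r^{4J-4n}/(4J-4n)$. This gives the same bound $Cr^{2J}w(B_\nu)^{1/2-1/p}\|g\|_{L^2}$, and the square-function argument is then needed only for $n=J$.

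Two routine points should also be made explicit. First, $\boldsymbol b_\nu$ is defined weakly, and $\boldsymbol b_\nu\in\mathcal D(\Delta_k^J)$ with $\Delta_k^J\boldsymbol b_\nu=\pi(A_\nu)$ follows by pairing with $g\in\mathcal D(\Delta_k^J)$ and using self-adjointness. Second, replacing $\Phi$ by $\Phi^2$ doubles the support of its Fourier transform, so either rescale it or enlarge the support set to $\mathcal O(3B_\nu)$.
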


\subsection{Operator molecules}

For a ball $B=B(\mathbf x_0,r)$, set
\begin{equation}\label{eq:Hardy_orbit_annuli}
    U_0(B)=\mathcal O(B),
    \qquad
    U_j(B)=\mathcal O(2^jB)\setminus
              \mathcal O(2^{j-1}B),\quad j\ge1.
\end{equation}
These sets form a disjoint decomposition of
$\mathbb R^N$, up to boundaries of measure zero.
Moreover,
\[
    w(U_j(B))\le |G|w(2^jB).
\]
For $j\ge2$, $\mathbf x\in U_j(B)$ and
$\mathbf y\in\mathcal O(B)$, we have
\begin{equation}\label{eq:Hardy_annular_separation}
    d(\mathbf x,\mathbf y)
    \ge (2^{j-1}-1)r\ge2^{j-2}r.
\end{equation}

We use the molecular conditions of
\cite[Definition 4.3]{DzHL_atom} with the
disjoint orbit annuli \eqref{eq:Hardy_orbit_annuli}.

Molecular decompositions of classical Hardy spaces were
introduced by Taibleson and Weiss
\cite{Taibleson-Weiss}. Operator-adapted Hardy spaces and
their atomic and molecular descriptions were developed
in a general functional-calculus setting in
\cite{Hofman,Duong}. In the rational Dunkl setting, the
particular operator atoms and molecules used here were
introduced and studied in \cite{DzHL_atom}.

\begin{definition}\label{def:molecule}
Let $0<p\le1$, $\varepsilon>0$, and let
$J>\mathbf N(2-p)/(4p)$ be an integer.
An $L^2(dw)$ function $\boldsymbol\mu$ is a
$(p,2,J,\Delta_k,\varepsilon)$-molecule associated
with $B=B(\mathbf x_0,r)$ if there exists
$\boldsymbol b\in\mathcal D(\Delta_k^J)$ such that
\[
    \boldsymbol\mu=\Delta_k^J\boldsymbol b
\]
and
\begin{equation}\label{eq:molecule_condition}
    \|(r^2\Delta_k)^n\boldsymbol b\|_{L^2(U_j(B))}
    \le r^{2J}2^{-j\varepsilon}
       w(2^jB)^{1/2-1/p},
    \qquad
    j\ge0,\quad n=0,\ldots,J.
\end{equation}
\end{definition}

The molecular condition immediately implies the
following tail estimate, as in
\cite[Lemma 4.4]{DzHL_atom}:
\begin{equation}\label{eq:Hardy_molecular_tail}
    \|(r^2\Delta_k)^n\boldsymbol b\|_
       {L^2(\bigcup_{j\ge j_0}U_j(B))}
    \le Cr^{2J}2^{-j_0\varepsilon}
       w(2^{j_0}B)^{1/2-1/p},
\end{equation}
for $j_0\ge0$ and $0\le n\le J$.
Indeed, square \eqref{eq:molecule_condition}, sum
over $j\ge j_0$, and use $1-2/p<0$ together with
$w(2^jB)\ge w(2^{j_0}B)$.
In particular,
\begin{equation}\label{eq:Hardy_molecular_global_L2}
    \|\boldsymbol\mu\|_{L^2(dw)}\le Cw(B)^{1/2-1/p},
    \qquad
    \|\boldsymbol b\|_{L^2(dw)}\le Cr^{2J}w(B)^{1/2-1/p}.
\end{equation}

The following is the molecular estimate from
\cite[Proposition 4.5]{DzHL_atom}, formulated
with \eqref{eq:Hardy_orbit_annuli}. We include the
argument to specify the annular convention.

\begin{proposition}\label{prop:m_in_Hp}
Let $0<p\le1$, $\varepsilon>0$, and let
$J>\mathbf N(2-p)/(4p)$ be an integer.
Every $(p,2,J,\Delta_k,\varepsilon)$-molecule
$\boldsymbol\mu$ belongs to
$\mathbb H^p_{\mathrm{Dunkl}}$, and
\[
    \|\boldsymbol\mu\|_{\mathbb H^p_{\mathrm{Dunkl}}}
    \le C.
\]
The constant may depend on $R,k,p,J,\varepsilon$,
but is independent of the molecule and its
associated ball.
\end{proposition}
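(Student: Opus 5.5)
We prove that $\|S\boldsymbol\mu\|_{L^p(dw)}\le C$ by splitting $\mathbb R^N$ into the orbit annuli and treating the near and far regions differently. Write $B=B(\mathbf x_0,r)$, and fix a large integer $j_0$ to be chosen (a fixed constant like $4$ suffices). Since $S\boldsymbol\mu=S(\Delta_k^J\boldsymbol b)$, we decompose
\[
    \|S\boldsymbol\mu\|_{L^p(dw)}^p
    \le\sum_{i\ge0}\|S\boldsymbol\mu\|_{L^p(U_i(B))}^p .
\]
For each $i$ we apply H\"older's inequality with exponents $2/p$ and $2/(2-p)$ on $U_i(B)$:
\[
    \|S\boldsymbol\mu\|_{L^p(U_i(B))}^p\le w(U_i(B))^{1-p/2}\|S\boldsymbol\mu\|_{L^2(U_i(B))}^p.
\]
It therefore suffices to show
\[
    \|S\boldsymbol\mu\|_{L^2(U_i(B))}\le C2^{-i\delta}w(2^iB)^{1/2-1/p}
\]
for some $\delta>0$. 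Then each term is at most $C2^{-i\delta p}$ by doubling ($w(U_i(B))\le|G|w(2^iB)$), and the sum converges.

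For this $L^2$ bound on $U_i(B)$, we split the molecule itself as
\[
    \boldsymbol\mu=\Delta_k^J\boldsymbol b
    =\sum_{j\ge0}\Delta_k^J(\boldsymbol b\chi_{U_j(B)}).
\]
This is awkward because truncation does not commute with $\Delta_k^J$. A better route is to write $\boldsymbol\mu=\sum_j\boldsymbol\mu\chi_{U_j}$ and use the two representations $t^2\Delta_kH_{t^2}\boldsymbol\mu$ and $t^{2}\Delta_kH_{t^2}\Delta_k^J\boldsymbol b=t^{-2J}(t^{2J}\Delta_k^{J}\,t^2\Delta_kH_{t^2})\boldsymbol b$. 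Concretely, we split the $t$-integral in $S$ at $t=r$.

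\emph{Small $t<r$.} For $t<r$ we write $t^2\Delta_kH_{t^2}\boldsymbol\mu=\sum_j Q_t(\boldsymbol\mu\chi_{U_j})$. For $|i-j|\le2$ we use $L^2$-boundedness of the square function \eqref{eq:Hardy_square_L2} together with \eqref{eq:molecule_condition} with $n=J$, which gives $\|\boldsymbol\mu\|_{L^2(U_j)}\le2^{-j\varepsilon}w(2^jB)^{1/2-1/p}$. For $|i-j|>2$ the orbit distance between the relevant cone region and $U_j$ is $\gtrsim2^{\max(i,j)}r\gg t$, so the Gaussian bound \eqref{eq:Qt-bound} gives decay $e^{-c4^{\max(i,j)}r^2/t^2}$. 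This beats any polynomial factor arising from $w(B(\mathbf x,t))^{-1}$ through \eqref{eq:growth}. The cone aperture adds only $t<r$ to distances, so it is harmless.

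\emph{Large $t\ge r$.} For $t\ge r$ we use $\boldsymbol\mu=\Delta_k^J\boldsymbol b$ and \eqref{eq:Q_operator_bounded}:
\[
    \|t^2\Delta_kH_{t^2}\boldsymbol\mu\|_{L^2}\le C t^{-2J}\|\boldsymbol b\|_{L^2}\le C(r/t)^{2J}w(B)^{1/2-1/p},
\]
by \eqref{eq:Hardy_molecular_global_L2}. The $L^2(dw)$ norm over the region $U_i$ is controlled after Fubini. For the region $i\ge1$ with $t\le2^ir$ we again localize $\boldsymbol b$ to $U_j$ and use Gaussian off-diagonal bounds, now for the kernel of $t^{2J+2}\Delta_k^{J+1}H_{t^2}$. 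These bounds have the same form as \eqref{eq:Qt-bound}, again from heat-kernel derivative estimates, and we assume them from \cite{DzHL_atom}. For $t>2^ir$ we simply use the global bound, which contributes $(2^{-i})^{2J}w(B)^{1/2-1/p}$.

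\emph{Main obstacle.} The hard step is the exponent bookkeeping in the large-$t$ regime. We must convert $w(B)^{1/2-1/p}$ into $w(2^iB)^{1/2-1/p}$, which costs a factor $2^{i\mathbf N(1/p-1/2)}$ by \eqref{eq:growth}. This has to be absorbed by $2^{-2iJ}$, and the condition $J>\mathbf N(2-p)/(4p)$ is exactly $2J>\mathbf N(1/p-1/2)$. This leaves $\delta=\min(\varepsilon,2J-\mathbf N(1/p-1/2))>0$, after also using the $2^{-j\varepsilon}$ decay for the terms with $j$ near $i$.

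Membership in $\mathbb H^p_{\mathrm{Dunkl}}$ then follows, since $\boldsymbol\mu\in L^2(dw)$ and $S\boldsymbol\mu\in L^p(dw)$.
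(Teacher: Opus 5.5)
Your argument is correct, but it is organized differently from the paper's. The paper uses, for each annulus $U_j(B)$, a single split time $T_j=2^{\theta(j-5)}r$. Here $\theta\in(0,1)$ is chosen so that $2J\theta>A_p:=\mathbf N(1/p-1/2)$. Above $T_j$ the paper uses only the spectral bound $\|t^2\Delta_kH_{t^2}\boldsymbol\mu\|_{L^2}\le Ct^{-2J}\|\boldsymbol b\|_{L^2}$, which gives $2^{-2J\theta j}$. Below $T_j$ it splits $\boldsymbol\mu$ (never $\boldsymbol b$) into two parts. The far part is controlled by the molecular tail. The near part is controlled by the Gaussian bound \eqref{eq:Qt-bound} for $Q_t$ alone, which yields $(T_j/2^jr)^{2K}\le C2^{-K(1-\theta)j}$ with $K$ arbitrary. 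So the paper needs no kernel information beyond \eqref{eq:Qt-bound}. The price is the interpolation parameter $\theta$ and the exponent $2J\theta-A_p$.

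You instead split at the fixed scales $r$ and $2^ir$. In the intermediate range you get the factor $t^{-2J}$ and off-diagonal decay simultaneously, by localizing $\boldsymbol b$ and applying Gaussian bounds to the kernel of $t^{2J+2}\Delta_k^{J+1}H_{t^2}$, together with the $n=0$ annular condition on $\boldsymbol b$. That kernel bound is an extra input not stated in the paper. It does follow from the known estimates $|\partial_t^mh_t(\mathbf x,\mathbf y)|\le C_mt^{-m}w(B(\mathbf x,\sqrt t))^{-1}e^{-cd(\mathbf x,\mathbf y)^2/t}$ of Dziuba\'nski--Hejna. In exchange you avoid $\theta$ and obtain $\delta=\min\{\varepsilon,2J-A_p\}$ directly.

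One claim needs correcting. For $j<i-2$, the cone region over $U_i(B)$ with aperture $t$ is separated from $U_j(B)$ by $\gtrsim 2^ir$ only when $t\le 2^{i-2}r$. For $t>2^{i-1}r$ it actually reaches $\mathcal O(B)$. So Gaussian separation does not hold on all of $r\le t\le 2^ir$.

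This is harmless. Move your second split point from $2^ir$ to $2^{i-2}r$. For $t\ge 2^{i-2}r$, the global bound $Ct^{-2J}\|\boldsymbol b\|_{L^2}\le C(r/t)^{2J}w(B)^{1/2-1/p}$ already produces $2^{-2Ji}w(B)^{1/2-1/p}$. For $r\le t\le 2^{i-2}r$, bound the near pieces using $\int_0^\infty t^{-4J}e^{-c(2^ir)^2/t^2}\,dt/t=C(2^ir)^{-4J}$, which avoids a logarithmic factor in $i$. With this adjustment, your bookkeeping giving $\|S\boldsymbol\mu\|_{L^2(U_i(B))}\le C2^{-i\delta}w(2^iB)^{1/2-1/p}$ goes through.
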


\begin{proof}
Write $\boldsymbol\mu=\Delta_k^J\boldsymbol b$,
and let $B=B(\mathbf x_0,r)$ be its associated ball.
Set
\[
    A_p=\mathbf N\left(\frac1p-\frac12\right).
\]
Since $2J>A_p$, we may choose $\theta\in(0,1)$
such that
\[
    \delta_0:=2J\theta-A_p>0.
\]
We shall prove an annular square-function estimate
with exponent
\[
    \delta=\min\{\varepsilon,\delta_0\}>0.
\]

For brevity, write
$
    \mathcal Q_t=t^2\Delta_kH_{t^2}.
$. For $T>0$, define
\[
    S_{<T}f(\mathbf x)
    =
    \left(
       \int_0^T
       \int_{\|\mathbf x-\mathbf y\|<t}
       |\mathcal Q_tf(\mathbf y)|^2
       \frac{dw(\mathbf y)}{w(B(\mathbf x,t))}
       \frac{dt}{t}
    \right)^{1/2},
\]
and define $S_{\ge T}$ by replacing $(0,T)$ with
$[T,\infty)$. Thus
\[
    (Sf)^2=(S_{<T}f)^2+(S_{\ge T}f)^2.
\]

We first record the consequence of doubling that
\begin{equation}\label{eq:molecule_cone_integration}
    \int_{\|\mathbf x-\mathbf y\|<t}
       \frac{dw(\mathbf x)}{w(B(\mathbf x,t))}
    \le C,
    \qquad \mathbf y\in\mathbb R^N,\quad t>0.
\end{equation}
Indeed, whenever $\|\mathbf x-\mathbf y\|<t$,
the measures $w(B(\mathbf x,t))$ and
$w(B(\mathbf y,t))$ are comparable.

The contribution from $\mathcal O(16B)$ follows
from H\"older's inequality,
\eqref{eq:Hardy_square_L2}, and
\eqref{eq:Hardy_molecular_global_L2}:
\begin{align*}
    \int_{\mathcal O(16B)}|S\boldsymbol\mu|^p\,dw
    &\le
       w(\mathcal O(16B))^{1-p/2}
       \|S\boldsymbol\mu\|_{L^2(dw)}^p\le
       Cw(B)^{1-p/2}\|\boldsymbol\mu\|_{L^2(dw)}^p
       \le C.
\end{align*}
Here we used $w(\mathcal O(16B))\le Cw(B)$.

Fix $j\ge5$ and set
\[
    T_j=2^{\theta(j-5)}r.
\]
We estimate the large-time and small-time parts
separately.

\medskip
\noindent
\textit{Large times.}
Since $\boldsymbol\mu=\Delta_k^J\boldsymbol b$,
the spectral estimate
\eqref{eq:Q_operator_bounded}
(see also \cite[Lemma 2.5]{DzHL_atom}) gives
\[
    \|\mathcal Q_t\boldsymbol\mu\|_{L^2(dw)}
    =\|\Delta_k^J\mathcal Q_t\boldsymbol b\|_{L^2(dw)}
    \le Ct^{-2J}\|\boldsymbol b\|_{L^2(dw)}.
\]
Fubini's theorem and
\eqref{eq:molecule_cone_integration} therefore yield
\begin{align*}
    &\|S_{\ge T_j}\boldsymbol\mu\|_{L^2(U_j(B))}^2\le C\int_{T_j}^\infty
       \|\mathcal Q_t\boldsymbol\mu\|_{L^2(dw)}^2\,\frac{dt}{t}\le C\|\boldsymbol b\|_{L^2(dw)}^2
       \int_{T_j}^\infty t^{-4J}\,\frac{dt}{t}\\
    &\le CT_j^{-4J}r^{4J}w(B)^{1-2/p}=C2^{-4J\theta(j-5)}w(B)^{1-2/p}.
\end{align*}
Since $1-2/p<0$, the growth estimate implies
\begin{equation}\label{eq:molecule_volume_conversion}
    w(B)^{1-2/p}
    =
    \left(\frac{w(2^jB)}{w(B)}\right)^{2/p-1}
       w(2^jB)^{1-2/p}
    \le C2^{2jA_p}w(2^jB)^{1-2/p}.
\end{equation}
Consequently,
\begin{equation}\label{eq:molecule_large_times}
    \|S_{\ge T_j}\boldsymbol\mu\|_{L^2(U_j(B))}
    \le C2^{-j\delta_0}w(2^jB)^{1/2-1/p}.
\end{equation}

\medskip
\noindent
\textit{Small times: the far part of the molecule.}
Decompose
\[
    \boldsymbol\mu
    =\boldsymbol\mu_{\mathrm{far}}
       +\boldsymbol\mu_{\mathrm{near}},
\]
where
\[
    \boldsymbol\mu_{\mathrm{far}}
    =\boldsymbol\mu\,
       \chi_{\mathbb R^N\setminus\mathcal O(2^{j-3}B)},
    \qquad
    \boldsymbol\mu_{\mathrm{near}}
    =\boldsymbol\mu\,\chi_{\mathcal O(2^{j-3}B)}.
\]
The molecular tail estimate
\eqref{eq:Hardy_molecular_tail}, applied with
$n=J$ and $j_0=j-2$, gives
\[
    \|\boldsymbol\mu_{\mathrm{far}}\|_{L^2(dw)}
    \le
       C2^{-(j-2)\varepsilon}
       w(2^{j-2}B)^{1/2-1/p}
    \le C2^{-j\varepsilon}w(2^jB)^{1/2-1/p}.
\]
The last step uses doubling over a fixed number
of scales. By \eqref{eq:Hardy_square_L2},
\begin{equation}\label{eq:molecule_small_times_far}
    \|S_{<T_j}\boldsymbol\mu_{\mathrm{far}}\|_{L^2(U_j(B))}
    \le C\|\boldsymbol\mu_{\mathrm{far}}\|_{L^2(dw)}
    \le C2^{-j\varepsilon}w(2^jB)^{1/2-1/p}.
\end{equation}

\medskip
\noindent
\textit{Small times: the near part of the molecule.}
We first explain the off-diagonal estimate used here.
The Gaussian bound \eqref{eq:Qt-bound}, together
with doubling and the orbit-ball measure estimate,
implies
\[
    \int_{\mathbb R^N}
       \exp\left(-c\frac{d(\mathbf y,\mathbf z)^2}{t^2}\right)
       \,dw(\mathbf z)
    \le Cw(B(\mathbf y,t)).
\]
This follows by splitting the integral into
$d(\mathbf y,\mathbf z)<t$ and the dyadic orbit
annuli of radii $2^\nu t$.

Let $E,F$ be measurable sets such that
$d(\mathbf y,\mathbf z)\ge\rho$ for
$\mathbf y\in E$ and $\mathbf z\in F$.
Splitting the Gaussian exponential into two factors
gives
\[
    \sup_{\mathbf y\in E}
       \int_F|Q_t(\mathbf y,\mathbf z)|\,dw(\mathbf z)
    \le C\exp\left(-c\frac{\rho^2}{t^2}\right).
\]
By symmetry of $Q_t$, the same bound holds with
$E$ and $F$ interchanged. Schur's test therefore gives
\begin{equation}\label{eq:molecule_off_diagonal}
    \|\chi_E\mathcal Q_t\chi_F\|_{L^2(dw)\to L^2(dw)}
    \le C\exp\left(-c\frac{\rho^2}{t^2}\right).
\end{equation}
The underlying Gaussian estimate is also recorded
in \cite[equation (2.30)]{DzHL_atom}.

For $0<t<T_j$, set
\[
    E_{j,t}
    =
    \{\mathbf y\in\mathbb R^N:
       \|\mathbf x-\mathbf y\|<t
       \text{ for some }\mathbf x\in U_j(B)\}.
\]
Since $j\ge5$ and $\theta<1$,
\[
    T_j\le2^{j-5}r.
\]
If $\mathbf y\in E_{j,t}$ and
$\mathbf z\in\mathcal O(2^{j-3}B)$, choose
$\mathbf x\in U_j(B)$ with
$\|\mathbf x-\mathbf y\|<t$. The triangle inequality
for the orbit distance gives
\begin{align*}
    d(\mathbf y,\mathbf z)
    &\ge
       d(\mathbf x,\mathbf x_0)
       -d(\mathbf x,\mathbf y)
       -d(\mathbf z,\mathbf x_0)\ge
       2^{j-1}r-t-2^{j-3}r
       \ge2^{j-2}r.
\end{align*}
Thus \eqref{eq:molecule_off_diagonal}, with
$E=E_{j,t}$ and $F=\mathcal O(2^{j-3}B)$, implies
\[
    \|\chi_{E_{j,t}}\mathcal Q_t
          \boldsymbol\mu_{\mathrm{near}}\|_{L^2(dw)}
    \le
       C\exp\left(-c\frac{(2^jr)^2}{t^2}\right)
       \|\boldsymbol\mu_{\mathrm{near}}\|_{L^2(dw)}.
\]
By Fubini's theorem and
\eqref{eq:molecule_cone_integration},
\begin{align*}
    \|S_{<T_j}\boldsymbol\mu_{\mathrm{near}}\|_{L^2(U_j(B))}^2
    &\le
       C\int_0^{T_j}
       \|\chi_{E_{j,t}}\mathcal Q_t
          \boldsymbol\mu_{\mathrm{near}}\|_{L^2(dw)}^2
       \,\frac{dt}{t}\le
       C\|\boldsymbol\mu\|_{L^2(dw)}^2
       \int_0^{T_j}
          \exp\left(-c\frac{(2^jr)^2}{t^2}\right)
          \frac{dt}{t}.
\end{align*}

Choose $K>0$ such that
\[
    K(1-\theta)>A_p+\delta.
\]
Using $e^{-cv^2}\le C_Kv^{-2K}$ for $v>0$, we obtain
\begin{align*}
    \int_0^{T_j}
       \exp\left(-c\frac{(2^jr)^2}{t^2}\right)\frac{dt}{t}
    &\le C_K\int_0^{T_j}
       \left(\frac{t}{2^jr}\right)^{2K}\frac{dt}{t}\le C_K\left(\frac{T_j}{2^jr}\right)^{2K}\le C_K2^{-2K(1-\theta)j}.
\end{align*}
Combining this with
\eqref{eq:Hardy_molecular_global_L2} and
\eqref{eq:molecule_volume_conversion}, we conclude that
\begin{equation}\label{eq:molecule_small_times_near}
\begin{split}
    \|S_{<T_j}\boldsymbol\mu_{\mathrm{near}}\|_{L^2(U_j(B))}
    &\le
       C2^{-j(K(1-\theta)-A_p)}
       w(2^jB)^{1/2-1/p}\le C2^{-j\delta}w(2^jB)^{1/2-1/p}.
\end{split}
\end{equation}

Finally, the triangle inequality in the cone integrals
gives
\[
    S\boldsymbol\mu
    \le S_{\ge T_j}\boldsymbol\mu
       +S_{<T_j}\boldsymbol\mu_{\mathrm{far}}
       +S_{<T_j}\boldsymbol\mu_{\mathrm{near}}.
\]
Hence \eqref{eq:molecule_large_times},
\eqref{eq:molecule_small_times_far}, and
\eqref{eq:molecule_small_times_near} yield
\[
    \|S\boldsymbol\mu\|_{L^2(U_j(B))}
    \le C2^{-j\delta}w(2^jB)^{1/2-1/p},
    \qquad j\ge5.
\]
Since $w(U_j(B))\le |G|w(2^jB)$, H\"older's inequality
implies
\begin{align*}
    \sum_{j\ge5}\int_{U_j(B)}|S\boldsymbol\mu|^p\,dw
    &\le
       \sum_{j\ge5}
       \|S\boldsymbol\mu\|_{L^2(U_j(B))}^p
       w(U_j(B))^{1-p/2}\le C\sum_{j\ge5}2^{-j\delta p}<\infty.
\end{align*}
Together with the estimate on $\mathcal O(16B)$,
this proves $\|S\boldsymbol\mu\|_p\le C$.
As $\boldsymbol\mu\in L^2(dw)$ by definition,
the proposition follows.
\end{proof}

Every $(p,2,J,\Delta_k)$-atom is a
$(p,2,J,\Delta_k,\varepsilon)$-molecule for each
$\varepsilon>0$. Hence the preceding proposition
also gives
\begin{equation}\label{atom_in_Hp}
    \|\boldsymbol a\|_{\mathbb H^p_{\mathrm{Dunkl}}}
    \le C
\end{equation}
uniformly over all such atoms, in agreement with
\cite[Proposition 4.2]{DzHL_atom}.

In the multiplier proof we apply
Proposition~\ref{teo:operator_atomic_decomposition}
with atomic order $2J$, and
Proposition~\ref{prop:m_in_Hp} with molecular order $J$.
This allows the additional cancellation of the input
atom to provide the low-frequency decay while
retaining the molecular structure of its image.

\section{Weighted estimates for the dyadic kernels}\label{sec:weighted}
Recall the definitions of $\omega$ and $\Omega$ from
\eqref{eq:omega_Omega}, the joint kernel estimate
\eqref{eq:joint_kernel_input}, and the volume
comparison \eqref{eq:Omega_ball_comparison}. All Sobolev norms below are the classical norms
$W_2^u$ defined in the introduction. Let $\mathcal W=\bigcup_{\alpha\in R}\alpha^\perp$. 
For a compactly supported function $a\in L^2(d\xi)$,
define
\begin{equation}\label{eq:annular_kernel_definition}
    \mathscr K[a](\mathbf x,\mathbf y)
    =c_k^{-2}\int_{\mathbb R^N}
       a(\xi)E(i\xi,\mathbf x)E(-i\xi,\mathbf y)\,dw(\xi).
\end{equation}
The integral is absolutely convergent, since $\omega$
is bounded on compact sets and
$|E(i\xi,\mathbf x)|\le1$ for real $\xi,\mathbf x$.
When $a$ is bounded, $\mathscr K[a]$ is the integral
kernel of $\mathcal T_a=\mathcal F^{-1}(a\mathcal F)$:
\begin{equation}\label{eq:multiplier_kernel_representation}
    \mathcal T_af(\mathbf x)
    =\int_{\mathbb R^N}
       \mathscr K[a](\mathbf x,\mathbf y)f(\mathbf y)
       \,dw(\mathbf y),
    \qquad f\in L^1(dw)\cap L^2(dw).
\end{equation}
In terms of the translation defined in
\begin{equation}\label{eq:multiplier_kernel_translation}
    \mathscr K[a](\mathbf x,\mathbf y)
    =c_k^{-1}
       \tau_{\mathbf x}(\mathcal F^{-1}a)(-\mathbf y).
\end{equation}

\begin{lemma}[Proposition 4.3, ~\cite{CLWW}]\label{lem:weighted_annular_kernel}
Let $\mathcal A$ be a fixed compact annulus centred at
the origin in $\mathbb R^N\setminus\{0\}$., and let $u\ge0$.
There exists a constant $C>0$ such that, for every
$a\in W_2^u(\mathbb R^N)$ supported in $\mathcal A$,
\begin{equation}\label{eq:weighted_annular_kernel}
    \bigl\|(1+d(\cdot,\mathbf y))^u
       \mathscr K[a](\cdot,\mathbf y)\bigr\|_{L^2(dw)}
    \le C\Omega(\mathbf y)^{-1/2}\|a\|_{W_2^u},
    \qquad \mathbf y\in\mathbb R^N.
\end{equation}
The constant depends only on $R$, $k$, $u$, and
$\mathcal A$.
\end{lemma}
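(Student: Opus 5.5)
The plan follows the standard weighted Plancherel argument, with the Dunkl kernel bound \eqref{eq:joint_kernel_input} playing the role of $|e^{i\langle x,\xi\rangle}|=1$.

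\textbf{Integer case.} Take $u$ to be a nonnegative integer. Since $(1+d)^u\le C\sum_{g\in G}(1+\|\cdot-g\mathbf y\|)^u$ and $E(\cdot,g\mathbf y)=E(g^{-1}\cdot,\mathbf y)$, it suffices to bound the functions $\mathbf x^\beta\,\mathscr K[a](\mathbf x,\mathbf y)$ for $|\beta|\le u$, after translating the centre to $\mathbf y$. Translating the centre is not automatic in the Dunkl setting. Following \cite{CLWW}, I would avoid it by writing $x_j-y_j$ through Dunkl operators acting on the product $a(\xi)E(-i\xi,\mathbf y)$. By \eqref{eq:inverse_coordinate_identity}, $x_j\mathscr K[a](\cdot,\mathbf y)$ is the inverse Dunkl transform of $iT_j^\xi\bigl(a(\xi)E(-i\xi,\mathbf y)\bigr)$.

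Expanding with the Leibniz rule \eqref{eq:general_Leibniz} and \eqref{eq:eigenfunction_identity}, we have $T^\xi_j E(-i\xi,\mathbf y)=-iy_jE(-i\xi,\mathbf y)$. This term cancels the $y_j$ part. What remains is $\partial_ja\cdot E$ plus reflection difference quotients
\[
a(\sigma_\alpha\xi)\,\frac{E(-i\xi,\mathbf y)-E(-i\sigma_\alpha\xi,\mathbf y)}{\langle\xi,\alpha\rangle}.
\]
Iterating gives a finite sum of terms in which classical derivatives and difference quotients fall on $a$, multiplied by $E$ or by reflected copies of $E$.

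\textbf{$L^2$ bound for each term.} Plancherel \eqref{eq:Plancherel} reduces the estimate to
\[
\int_{\mathcal A}|D a(\xi)|^2\,|E(-i g\xi,\mathbf y)|^2\,w(\xi)\,d\xi .
\]
Here \eqref{eq:joint_kernel_input} gives the bound $C\Omega(\mathbf y)^{-1}\|Da\|_{L^2(d\xi)}^2$, and $\|Da\|_{L^2}\le C\|a\|_{W_2^u}$, with difference quotients controlled by Hardy-type inequalities along $\alpha^\perp$. This is exactly the mechanism that makes $\Omega(\mathbf y)^{-1/2}$ appear.

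\textbf{Main obstacle.} Difference quotients in which the numerator is a difference of Dunkl kernels rather than of $a$ cannot be bounded naively. One must either keep them inside the Dunkl operator form, or choose the order of operations so that they cancel, which is the content of \cite[Proposition~4.3]{CLWW}.

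\textbf{Fractional $u$.} I would interpolate between integer orders using complex interpolation of the map $a\mapsto(1+d(\cdot,\mathbf y))^{z}\mathscr K[a]$. The family $\{W_2^u\}$ is a complex interpolation scale, and the weighted $L^2$ spaces interpolate in the same way.

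Since the lemma is quoted from \cite{CLWW}, in the paper itself I would simply cite it.
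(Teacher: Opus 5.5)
Your closing sentence matches the paper's proof. The paper cites \cite[Proposition~4.3]{CLWW} with $g=\mathrm{id}$ and adds a density step: approximate $a$ in $W_2^u$ by smooth functions supported in a fixed, slightly larger annulus. Since $|E(i\xi,\mathbf x)|\le1$, one has $|\mathscr K[a]-\mathscr K[\tilde a]|\le c_k^{-2}\int|a-\tilde a|\,dw$, so the kernels converge pointwise and Fatou's lemma carries the bound to the limit. The argument you sketch for the cited proposition itself, however, does not work, and the difficulty is not where you place it.

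\textbf{The gap is the reduction to weights centred at a single point.}

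\emph{The sum over $g$ does not help.} The bound $(1+d)^u\le C\sum_{g}(1+\|\cdot-g\mathbf y\|)^u$ is true but useless. Controlling its right-hand side would require $\mathscr K[a](\cdot,\mathbf y)$ to decay away from every $g\mathbf y$ at once. That is false, because the kernel is concentrated near $\mathbf y$, where $\|\mathbf x-g\mathbf y\|\approx\|\mathbf y-g\mathbf y\|$. Nor does symmetry recentre the weight: $\mathscr K[a](g\mathbf x,g\mathbf y)=\mathscr K[a\circ g](\mathbf x,\mathbf y)$ moves both variables at once.

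\emph{The single-centre bound is false for large $|\beta|$.} What your computation actually aims at is
\[
\|(\mathbf x-\mathbf y)^\beta\mathscr K[a](\mathbf x,\mathbf y)\|_{L^2(dw(\mathbf x))}\lesssim\Omega(\mathbf y)^{-1/2}\|a\|_{W_2^{|\beta|}}.
\]
This is stronger than the lemma, and your own iteration shows where it breaks.
\begin{itemize}
\item First step. If $T_j$ falls on $E(-i\xi,\mathbf y)$, which is what produces $y_jE$, then \eqref{eq:general_Leibniz} leaves $E(-i\xi,\mathbf y)\,\partial_ja(\xi)$ and
\[
    E(-i\xi,\sigma_\alpha\mathbf y)\,
    \frac{a(\xi)-a(\sigma_\alpha\xi)}{\langle\xi,\alpha\rangle}.
\]
So the difference quotients fall on $a$, not on $E$. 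The ``main obstacle'' you name does not arise, and \eqref{eq:joint_kernel_input} handles $|\beta|=1$.
\item Second step. $E(-i\xi,\sigma_\alpha\mathbf y)$ is an eigenfunction of $T_i^\xi$ with eigenvalue $-i(\sigma_\alpha\mathbf y)_i$. Hence $iT_i^\xi-y_i$ produces the coefficient $(\sigma_\alpha\mathbf y-\mathbf y)_i=-\langle\mathbf y,\alpha\rangle\alpha_i$, which is unbounded in $\mathbf y$.
\end{itemize}
This coefficient cannot be cancelled. The reflected terms describe the part of the kernel near $\sigma_\alpha\mathbf y$, where $\|\mathbf x-\mathbf y\|\approx\|\mathbf y-\sigma_\alpha\mathbf y\|$. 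In rank one with $k>0$, the Bessel asymptotics of $E$ show that, for non-even $a$, this piece has $L^2(dw)$-size about $|y|^{-1}\Omega(y)^{-1/2}$ near $-y$. So the bound with weight $(1+|x-y|)^u$ fails for every $u>1$.

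\textbf{What is missing.} The lemma is used with $u=\sigma>\mathbf N(1/p-1/2)$, which is large in general. A proof must therefore measure each reflected piece from its own centre $g\mathbf y$, so that only $d(\mathbf x,\mathbf y)=\min_g\|\mathbf x-g\mathbf y\|$ enters. This is exactly what \cite[Proposition~4.3]{CLWW} has to supply, and your sketch has no mechanism for it. The complex-interpolation step for fractional $u$ is harmless in itself, but it needs the integer-order endpoint estimates, which are precisely what is missing.
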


\begin{proof}
This follows from \cite[Proposition~4.3]{CLWW}, applied
with $g=\mathrm{id}$, together with a standard density
argument using smooth approximations supported in a fixed
slightly larger annulus.
\end{proof}

Choose a radial function
$\phi\in C_c^\infty(\mathbb R^N\setminus\{0\})$,
supported in $\{\xi:1/2\le\|\xi\|\le2\}$, such that
\[
    \sum_{\ell\in\mathbb Z}\phi(2^{-\ell}\xi)=1,
    \qquad \xi\ne0.
\]
Set
\[
    m_\ell(\xi)=m(\xi)\phi(2^{-\ell}\xi),
    \qquad
    \lambda_\ell=2^\ell,
\]
and
\[
    a_\ell(\eta)
    =m_\ell(\lambda_\ell\eta)
    =m(\lambda_\ell\eta)\phi(\eta).
\]
Suppose that the quantity $M$ in
\eqref{eq:assumption} is finite. By
Remark~\ref{rem:any_eta},
\begin{equation}\label{eq:normalized_symbol_bound}
    \sup_{\ell\in\mathbb Z}\|a_\ell\|_{W_2^s}
    \le C_{\psi,\phi,s}M.
\end{equation}

The following corollary records the rescaled forms of
Lemma~\ref{lem:weighted_annular_kernel} that will be used
in the proof of the multiplier theorem. Its purpose is to
make explicit the dependence on the dyadic scale
$\lambda_\ell=2^\ell$, both for the localized multiplier
$m_\ell$ and for the modified multiplier
$m_\ell(\xi)\|\xi\|^{2J}$.

\begin{corollary}\label{cor:weighted_dyadic_kernel}
Let $0\le\sigma\le s$, and let
$\mathscr K_\ell=\mathscr K[m_\ell]$. Then
\begin{equation}\label{eq:weighted_dyadic_kernel}
    \bigl\|(1+\lambda_\ell d(\cdot,\mathbf y))^\sigma
       \mathscr K_\ell(\cdot,\mathbf y)\bigr\|_{L^2(dw)}
    \le CM\,w(B(\mathbf y,\lambda_\ell^{-1}))^{-1/2}.
\end{equation}
Moreover, for every integer $J\ge0$, if
\[
    n_\ell(\xi)=m_\ell(\xi)\|\xi\|^{2J},
    \qquad
    \mathscr L_\ell=\mathscr K[n_\ell],
\]
then
\begin{equation}\label{eq:weighted_dyadic_power_kernel}
    \bigl\|(1+\lambda_\ell d(\cdot,\mathbf y))^\sigma
       \mathscr L_\ell(\cdot,\mathbf y)\bigr\|_{L^2(dw)}
    \le CM\lambda_\ell^{2J}
       w(B(\mathbf y,\lambda_\ell^{-1}))^{-1/2}.
\end{equation}
The constants are independent of $\ell$, $\mathbf y$,
and $m$; the second constant may also depend on $J$.
\end{corollary}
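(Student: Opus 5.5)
The plan is to deduce both estimates from Lemma~\ref{lem:weighted_annular_kernel} by a dilation argument. The first step is to write the scaling law for the kernel. For $a$ compactly supported and $\lambda>0$, the substitution $\xi=\lambda\eta$ in \eqref{eq:annular_kernel_definition}, together with the homogeneity $dw(\lambda\eta)=\lambda^{\mathbf N}dw(\eta)$ and the relation $E(i\lambda\eta,\mathbf x)=E(i\eta,\lambda\mathbf x)$ from \eqref{eq:Dunkl_kernel_identities}, gives
\[
    \mathscr K[a(\lambda^{-1}\cdot)](\mathbf x,\mathbf y)
    =\lambda^{\mathbf N}\mathscr K[a](\lambda\mathbf x,\lambda\mathbf y).
\]
We apply this with $\lambda=\lambda_\ell$ and $a=a_\ell$, so that $m_\ell=a_\ell(\lambda_\ell^{-1}\cdot)$. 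Then we change variables $\mathbf x'=\lambda_\ell\mathbf x$ in the $L^2(dw)$ norm, which contributes a factor $\lambda_\ell^{-\mathbf N/2}$, and use $d(\lambda\mathbf x,\lambda\mathbf y)=\lambda d(\mathbf x,\mathbf y)$. This yields
\[
    \bigl\|(1+\lambda_\ell d(\cdot,\mathbf y))^\sigma\mathscr K_\ell(\cdot,\mathbf y)\bigr\|_{L^2(dw)}
    =\lambda_\ell^{\mathbf N/2}\bigl\|(1+d(\cdot,\lambda_\ell\mathbf y))^\sigma\mathscr K[a_\ell](\cdot,\lambda_\ell\mathbf y)\bigr\|_{L^2(dw)}.
\]

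The second step is to apply Lemma~\ref{lem:weighted_annular_kernel} with $u=\sigma$ and the fixed annulus $\mathcal A=\{1/2\le\|\eta\|\le2\}$, which contains $\operatorname{supp}a_\ell$. Since $\sigma\le s$, we have $\|a_\ell\|_{W_2^\sigma}\le\|a_\ell\|_{W_2^s}\le CM$ by \eqref{eq:normalized_symbol_bound}. The right-hand side above is therefore bounded by $CM\lambda_\ell^{\mathbf N/2}\Omega(\lambda_\ell\mathbf y)^{-1/2}$. By \eqref{eq:Omega_ball_comparison} and the homogeneity \eqref{eq:t_ball},
\[
    \Omega(\lambda_\ell\mathbf y)\asymp w(B(\lambda_\ell\mathbf y,1))=\lambda_\ell^{\mathbf N}w(B(\mathbf y,\lambda_\ell^{-1})).
\]
The powers of $\lambda_\ell$ cancel, and this gives \eqref{eq:weighted_dyadic_kernel}. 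All constants depend only on $R$, $k$, $s$ and $\phi$, not on $\ell$ or $\mathbf y$.

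For \eqref{eq:weighted_dyadic_power_kernel}, observe that
\[
    n_\ell(\lambda_\ell\eta)=\lambda_\ell^{2J}a_\ell(\eta)\|\eta\|^{2J}.
\]
Choose a smooth cutoff $\chi\in C_c^\infty(\mathbb R^N\setminus\{0\})$ equal to $1$ on $\mathcal A$. Then $\|\eta\|^{2J}a_\ell=(\chi\|\cdot\|^{2J})a_\ell$, and multiplication by the fixed $C_c^\infty$ function $\chi\|\eta\|^{2J}$ is bounded on $W_2^\sigma$. This is the same fact invoked in Remark~\ref{rem:any_eta}, and with the Fourier-side norm it follows from Peetre's inequality $(1+\|\mathbf x\|)^\sigma\le(1+\|\mathbf x-\mathbf z\|)^\sigma(1+\|\mathbf z\|)^\sigma$ together with the rapid decay of the Fourier transform of the cutoff. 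Consequently
\[
    \|\|\cdot\|^{2J}a_\ell\|_{W_2^\sigma}\le C_JM,
\]
and repeating the argument of the first two steps produces the extra factor $\lambda_\ell^{2J}$.

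The computation is routine. The only points needing care are the exact normalization in the dilation identity, where the powers of $c_k$ are unaffected and only the factor $\lambda^{\mathbf N}$ from $dw$ appears, and the monotonicity of $W_2^u$ norms in $u$, which holds since the weight $(1+\|\mathbf x\|)^u$ increases with $u$. If the annular lemma is only available for smooth symbols, the density remark in its proof already covers this.
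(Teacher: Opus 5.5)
Your proposal is correct and follows essentially the same route as the paper: apply Lemma~\ref{lem:weighted_annular_kernel} to the normalized symbols $a_\ell$, and to $\|\cdot\|^{2J}a_\ell$ via a fixed cutoff. Then rescale the kernel using the homogeneity of $E$ and $dw$, and convert $\Omega(\lambda_\ell\mathbf y)$ into $\lambda_\ell^{\mathbf N}w(B(\mathbf y,\lambda_\ell^{-1}))$ using \eqref{eq:Omega_ball_comparison} and \eqref{eq:t_ball}. The differences are only cosmetic: you state the dilation identity for a general symbol before applying it, and you justify the boundedness of multiplication by $\chi\|\cdot\|^{2J}$ on $W_2^\sigma$ via Peetre's inequality, which the paper takes for granted.
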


\begin{proof}
Recall that
\[
    a_\ell(\eta)
    =m_\ell(\lambda_\ell\eta)
    =m(\lambda_\ell\eta)\phi(\eta).
\]
Thus all the normalized symbols $a_\ell$ are supported
in the same fixed compact annulus centred at the origin,
\[
    \operatorname{supp}a_\ell
    \subseteq\operatorname{supp}\phi
    \subseteq
    \mathcal A
    :=\{\eta\in\mathbb R^N:1/2\le\|\eta\|\le2\}.
\]
Moreover, by \eqref{eq:normalized_symbol_bound} and
$\sigma\le s$,
\[
    \|a_\ell\|_{W_2^\sigma}
    \le\|a_\ell\|_{W_2^s}\le CM.
\]
Consequently, Lemma~\ref{lem:weighted_annular_kernel}
applies to every $a_\ell$ with the same constant:
\begin{equation}\label{eq:normalized_kernel_estimate}
    \bigl\|(1+d(\cdot,\mathbf y))^\sigma
       \mathscr K[a_\ell](\cdot,\mathbf y)\bigr\|_{L^2(dw)}
    \le CM\Omega(\mathbf y)^{-1/2},
    \qquad \mathbf y\in\mathbb R^N.
\end{equation}

The homogeneity of $E$ and $dw$ gives
\[
    \mathscr K_\ell(\mathbf x,\mathbf y)
    =\lambda_\ell^{\mathbf N}
       \mathscr K[a_\ell]
          (\lambda_\ell\mathbf x,\lambda_\ell\mathbf y).
\]
Since
\[
    d(\lambda_\ell\mathbf x,\lambda_\ell\mathbf y)
    =\lambda_\ell d(\mathbf x,\mathbf y),
\]
the change of variables
and \eqref{eq:normalized_kernel_estimate} yield
\begin{align*}
    &\bigl\|(1+\lambda_\ell d(\cdot,\mathbf y))^\sigma
       \mathscr K_\ell(\cdot,\mathbf y)\bigr\|_{L^2(dw)}=
       \lambda_\ell^{\mathbf N/2}
       \bigl\|(1+d(\cdot,\lambda_\ell\mathbf y))^\sigma
       \mathscr K[a_\ell](\cdot,\lambda_\ell\mathbf y)
       \bigr\|_{L^2(dw)}\le
       CM\lambda_\ell^{\mathbf N/2}
       \Omega(\lambda_\ell\mathbf y)^{-1/2}.
\end{align*}
By \eqref{eq:Omega_ball_comparison} and
\eqref{eq:t_ball},
\[
    \Omega(\lambda_\ell\mathbf y)
    \asymp w(B(\lambda_\ell\mathbf y,1))
    =\lambda_\ell^{\mathbf N}
       w(B(\mathbf y,\lambda_\ell^{-1})).
\]
This proves \eqref{eq:weighted_dyadic_kernel}.

For the second assertion, set
\[
    b_\ell(\eta)
    =n_\ell(\lambda_\ell\eta)
    =\lambda_\ell^{2J}\|\eta\|^{2J}a_\ell(\eta).
\]
These symbols are also supported in the same fixed
annulus $\mathcal A$. Choose
$\chi_0\in C_c^\infty(\mathbb R^N\setminus\{0\})$
equal to one on a neighbourhood of $\mathcal A$.
Then
\[
    b_\ell(\eta)
    =\lambda_\ell^{2J}
       \bigl(\chi_0(\eta)\|\eta\|^{2J}\bigr)a_\ell(\eta).
\]
Multiplication by the fixed smooth compactly supported
function $\chi_0(\eta)\|\eta\|^{2J}$ is bounded on
$W_2^\sigma$. Therefore,
\[
    \|b_\ell\|_{W_2^\sigma}
    \le C_J\lambda_\ell^{2J}\|a_\ell\|_{W_2^\sigma}
    \le C_JM\lambda_\ell^{2J}.
\]
Applying Lemma~\ref{lem:weighted_annular_kernel}
to $b_\ell$ gives
\[
    \bigl\|(1+d(\cdot,\mathbf y))^\sigma
       \mathscr K[b_\ell](\cdot,\mathbf y)\bigr\|_{L^2(dw)}
    \le C_JM\lambda_\ell^{2J}\Omega(\mathbf y)^{-1/2},
\]
uniformly in $\ell$ and $\mathbf y$. Finally,
\[
    \mathscr L_\ell(\mathbf x,\mathbf y)
    =\lambda_\ell^{\mathbf N}
       \mathscr K[b_\ell]
          (\lambda_\ell\mathbf x,\lambda_\ell\mathbf y).
\]
The same change of variables and ball measure comparison
as above prove \eqref{eq:weighted_dyadic_power_kernel}.
\end{proof}

\section{Proof of H\"ormander's multiplier theorem}\label{sec:Hormander}

\begin{proof}[Proof of Theorem~\ref{teo:teo_main}]
Let $0<p\le1$, and assume that
\[
    M=\sup_{t>0}
       \|\psi(\cdot)m(t\cdot)\|_{W_2^s}<\infty
\]
for some
\[
    s>\mathbf N\left(\frac1p-\frac12\right)=: A_p.
\]

Choose $\sigma$ and a positive integer $J$ such that
\begin{equation}\label{eq:new_parameter_choice}
    A_p<\sigma<s,
    \qquad 2J>\sigma.
\end{equation}
Then
\[
    \sigma>\frac{\mathbf N}{2},
    \qquad
    J>\frac{\mathbf N(2-p)}{4p}.
\]
Set
\begin{equation}\label{eq:new_molecular_epsilon}
    \varepsilon=\sigma-A_p>0.
\end{equation}
All constants below may depend on the fixed parameters
$R,k,p,s,\sigma,J,\psi,\phi$, but not on $m$, the atom,
its associated ball, or the dyadic indices.

We first note that
\begin{equation}\label{eq:new_multiplier_Linfty}
    \|m\|_{L^\infty}\le CM.
\end{equation}
Indeed, by Remark~\ref{rem:any_eta},
\[
    \sup_{\ell\in\mathbb Z}
       \|m(2^\ell\cdot)\phi\|_{W_2^s}\le CM.
\]
Since $s>\mathbf N/2\ge N/2$, the classical Sobolev
embedding theorem gives uniform $L^\infty$ bounds
for these localized symbols. The identity
\[
    m(\xi)=\sum_{\ell\in\mathbb Z}
       \phi(2^{-\ell}\xi)m(\xi),
    \qquad \xi\ne0,
\]
and the finite overlap of the annuli imply
\eqref{eq:new_multiplier_Linfty}. Consequently,
Plancherel's theorem (see~\eqref{eq:Plancherel}) yields
\begin{equation}\label{eq:new_multiplier_L2}
    \|\mathcal T_m\|_{L^2(dw)\to L^2(dw)}\le CM.
\end{equation}
Let $\boldsymbol a=\Delta_k^{2J}\boldsymbol b$ be a
$(p,2,2J,\Delta_k)$-atom associated with
$B=B(\mathbf x_0,r)$. Thus
$\boldsymbol b\in\mathcal D(\Delta_k^{2J})$,
\begin{equation}\label{eq:new_atom_support}
    \operatorname{supp}\boldsymbol b\subset\mathcal O(B),
\end{equation}
and
\begin{equation}\label{eq:new_atom_size}
    \|(r^2\Delta_k)^q\boldsymbol b\|_{L^2(dw)}
    \le r^{4J}w(B)^{1/2-1/p},
    \qquad q=0,\ldots,2J.
\end{equation}
Moreover,
\begin{equation}\label{eq:new_atom_power_support}
    \operatorname{supp}(\Delta_k^q\boldsymbol b)
       \subset\mathcal O(B),
    \qquad q=0,\ldots,2J.
\end{equation}
To justify this for the domain functions under
consideration, observe that $\mathcal O(B)$ is
$G$-invariant. Dunkl differentiation preserves support in a closed $G$-invariant set:
ordinary differentiation preserves support, and the
reflection terms only reflect that support by elements
of $G$. Applying this observation repeatedly proves
\eqref{eq:new_atom_power_support}.

We shall prove that $ \mathcal T_m\boldsymbol a$ is proportional to $(p,2,J,\varepsilon)$-molecule.

Define
$
    \boldsymbol b'=\mathcal T_m\Delta_k^J\boldsymbol b
$.
The characterization of
$\mathcal D(\Delta_k^J)$, see \eqref{eq:domain}, and
\eqref{eq:new_multiplier_Linfty} imply
$
    \boldsymbol b'\in\mathcal D(\Delta_k^J)
$.
Indeed, $\boldsymbol b'\in L^2(dw)$ and
\[
    \|\xi\|^{2J}\mathcal F\boldsymbol b'(\xi)
    =(-1)^Jm(\xi)\|\xi\|^{4J}\mathcal F\boldsymbol b(\xi)
       \in L^2(dw(\xi)).
\]
Commutation of the multipliers gives
\begin{equation}\label{eq:new_molecular_representation}
    \mathcal T_m\boldsymbol a=\Delta_k^J\boldsymbol b'.
\end{equation}

Now we verify the size condition in
Definition~\ref{def:molecule}, namely
\begin{equation}\label{eq:new_molecule_target}
    \|(r^2\Delta_k)^n\boldsymbol b'\|_{L^2(U_j(B))}
    \le CM r^{2J}2^{-j\varepsilon}
       w(2^jB)^{1/2-1/p},
\end{equation}
for $n=0,\ldots,J$ and $j\ge0$. Here
\[
    U_0(B)=\mathcal O(B),
    \qquad
    U_j(B)=\mathcal O(2^jB)\setminus
              \mathcal O(2^{j-1}B),\quad j\ge1.
\]

For $j=0,1$, equations \eqref{eq:new_multiplier_L2}
and \eqref{eq:new_atom_size} give
\begin{align*}
    \|(r^2\Delta_k)^n\boldsymbol b'\|_{L^2(U_j(B))}
    &\le
       CM r^{-2J}
       \|(r^2\Delta_k)^{J+n}\boldsymbol b\|_{L^2(dw)}\le
       CM r^{2J}w(B)^{1/2-1/p}.
\end{align*}
Since $w(2B)\asymp w(B)$, this proves
\eqref{eq:new_molecule_target} for these values of $j$.

Fix $j\ge2$. If $\mathbf x\in U_j(B)$ and
$\mathbf y\in\mathcal O(B)$, then
\begin{equation}\label{eq:new_orbit_separation}
\begin{split}
    d(\mathbf x,\mathbf y)
    &\ge d(\mathbf x,\mathbf x_0)
          -d(\mathbf y,\mathbf x_0)\ge (2^{j-1}-1)r
     \ge 2^{j-2}r.
\end{split}
\end{equation}
We also need the comparison
\begin{equation}\label{eq:new_w(B)olume_comparison}
    w(B(\mathbf y,\lambda^{-1}))^{-1/2}
    \le Cw(B)^{-1/2}
       \max\{1,(\lambda r)^{\mathbf N/2}\},
    \qquad \mathbf y\in\mathcal O(B),\quad\lambda>0.
\end{equation}
To prove it, choose $g\in G$ such that
$\mathbf y\in B(g(\mathbf x_0),r)$. Then
\[
    B(g(\mathbf x_0),r)\subset B(\mathbf y,2r),
    \qquad w(B(g(\mathbf x_0),r))=w(B).
\]
If $\lambda r\ge1$, the growth estimate
\eqref{eq:growth} implies
\[
    w(B)\le w(B(\mathbf y,2r))
       \le C(\lambda r)^{\mathbf N}
          w(B(\mathbf y,\lambda^{-1})).
\]
If $\lambda r<1$, inclusion and doubling give
\[
    w(B)\le w(B(\mathbf y,2r))
       \le w(B(\mathbf y,2\lambda^{-1}))
       \le Cw(B(\mathbf y,\lambda^{-1})).
\]
This proves \eqref{eq:new_w(B)olume_comparison}.

Fix $n\in\{0,\ldots,J\}$ and put
\[
    f_n=(r^2\Delta_k)^n\Delta_k^J\boldsymbol b,
    \qquad
    g_n=(r^2\Delta_k)^n\boldsymbol b.
\]
By \eqref{eq:new_atom_power_support}, both functions
are supported in $\mathcal O(B)$. Moreover,
\eqref{eq:new_atom_size} implies
\[
    \|f_n\|_{L^2(dw)}\le r^{2J}w(B)^{1/2-1/p},
    \qquad
    \|g_n\|_{L^2(dw)}\le r^{4J}w(B)^{1/2-1/p}.
\]
Since $w(\mathcal O(B))\le |G|w(B)$, the Cauchy--Schwarz
inequality gives
\begin{equation}\label{eq:new_atom_L1_bounds}
    \|f_n\|_1\le Cr^{2J}w(B)^{1-1/p},
    \qquad
    \|g_n\|_1\le Cr^{4J}w(B)^{1-1/p}.
\end{equation}

Recall the symbols
\[
    m_\ell(\xi)=m(\xi)\phi(2^{-\ell}\xi),
    \qquad
    n_\ell(\xi)=m_\ell(\xi)\|\xi\|^{2J},
    \qquad
    \lambda_\ell=2^\ell,
\]
and their integral kernels $\mathscr K_\ell$ and
$\mathscr L_\ell$. Corollary~\ref{cor:weighted_dyadic_kernel}
applies with the parameters $\sigma$ and $J$ chosen
above.

The dyadic partition of unity gives
\begin{equation}\label{eq:new_dyadic_atom_decomposition}
    (r^2\Delta_k)^n\boldsymbol b'
    =\mathcal T_mf_n
    =\sum_{\ell\in\mathbb Z}\mathcal T_{m_\ell}f_n
    \quad\text{in }L^2(dw).
\end{equation}
Indeed, the finite-overlap property gives a uniform
bound for the partial sums of
$\sum_\ell\phi(2^{-\ell}\xi)$, and these sums converge
to one for $\xi\ne0$. Dominated convergence on the
transform side and Plancherel's theorem prove the
asserted $L^2$ convergence.

\medskip
\noindent
\textit{Case 1: $\lambda_\ell r\ge1$.}
Write $u=\lambda_\ell r$. Since $f_n\in L^1(dw)\cap
L^2(dw)$, the integral-kernel representation~\eqref{eq:multiplier_kernel_representation},
Minkowski's inequality, \eqref{eq:new_orbit_separation},
and \eqref{eq:weighted_dyadic_kernel} yield
\begin{align*}
    \|\mathcal T_{m_\ell}f_n\|_{L^2(U_j(B))}
    &\le
       \int_{\mathcal O(B)}
       |f_n(\mathbf y)|
       \|\mathscr K_\ell(\cdot,\mathbf y)\|_{L^2(U_j(B))}
       \,dw(\mathbf y)\\
    &\le
       CM(1+2^ju)^{-\sigma}
       \int_{\mathcal O(B)}
       |f_n(\mathbf y)|
       w(B(\mathbf y,\lambda_\ell^{-1}))^{-1/2}
       \,dw(\mathbf y).
\end{align*}
The constants absorb the fixed factor in
\eqref{eq:new_orbit_separation}. Using
\eqref{eq:new_w(B)olume_comparison} and
\eqref{eq:new_atom_L1_bounds}, we obtain
\begin{equation}\label{eq:new_high_frequency_term}
    \|\mathcal T_{m_\ell}f_n\|_{L^2(U_j(B))}
    \le CM r^{2J}w(B)^{1/2-1/p}
       u^{\mathbf N/2}(1+2^ju)^{-\sigma}.
\end{equation}
Since $\sigma>\mathbf N/2$,
\begin{equation}\label{eq:new_high_frequency_sum}
\begin{split}
    \sum_{\lambda_\ell r\ge1}
       \|\mathcal T_{m_\ell}f_n\|_{L^2(U_j(B))}
    &\le
       CM r^{2J}w(B)^{1/2-1/p}2^{-j\sigma}
       \sum_{\lambda_\ell r\ge1}
          (\lambda_\ell r)^{\mathbf N/2-\sigma}\\
    &\le
       CM r^{2J}w(B)^{1/2-1/p}2^{-j\sigma}.
\end{split}
\end{equation}
The geometric-series bound is uniform in $r$:
if $\ell_0$ is the smallest integer with
$2^{\ell_0}r\ge1$, then $1\le2^{\ell_0}r<2$.

\medskip
\noindent
\textit{Case 2: $\lambda_\ell r<1$.}
Note that
\begin{equation}\label{eq:new_low_frequency_identity}
    \mathcal T_{m_\ell}f_n
    =(-1)^J\mathcal T_{n_\ell}g_n.
\end{equation}
Applying Minkowski's inequality and
\eqref{eq:weighted_dyadic_power_kernel}, together
with \eqref{eq:new_orbit_separation}, gives
\begin{align*}
    \|\mathcal T_{m_\ell}f_n\|_{L^2(U_j(B))}
    &\le
       CM\lambda_\ell^{2J}(1+2^ju)^{-\sigma}
       \int_{\mathcal O(B)}
       |g_n(\mathbf y)|
       w(B(\mathbf y,\lambda_\ell^{-1}))^{-1/2}
       \,dw(\mathbf y).
\end{align*}
Here $u<1$. Therefore
\eqref{eq:new_w(B)olume_comparison} and
\eqref{eq:new_atom_L1_bounds} imply
\begin{equation}\label{eq:new_low_frequency_term}
\begin{split}
    \|\mathcal T_{m_\ell}f_n\|_{L^2(U_j(B))}
    &\le
       CM\lambda_\ell^{2J}
       (1+2^ju)^{-\sigma}r^{4J}w(B)^{1/2-1/p}\\
    &=CM r^{2J}w(B)^{1/2-1/p}
       u^{2J}(1+2^ju)^{-\sigma}.
\end{split}
\end{equation}
Using $(1+2^ju)^{-\sigma}\le2^{-j\sigma}u^{-\sigma}$
and $2J>\sigma$, we obtain
\begin{equation}\label{eq:new_low_frequency_sum}
\begin{split}
    \sum_{\lambda_\ell r<1}
       \|\mathcal T_{m_\ell}f_n\|_{L^2(U_j(B))}
    &\le
       CM r^{2J}w(B)^{1/2-1/p}2^{-j\sigma}
       \sum_{\lambda_\ell r<1}
          (\lambda_\ell r)^{2J-\sigma}\\
    &\le
       CM r^{2J}w(B)^{1/2-1/p}2^{-j\sigma}.
\end{split}
\end{equation}
Again the geometric-series bound is independent of $r$.

Equations \eqref{eq:new_high_frequency_sum} and
\eqref{eq:new_low_frequency_sum} show absolute
convergence of the dyadic series in $L^2(U_j(B))$.
Its sum agrees with the restriction of the $L^2(dw)$
limit in \eqref{eq:new_dyadic_atom_decomposition}.
Consequently,
\begin{equation}\label{eq:new_annular_atom_bound}
    \|(r^2\Delta_k)^n\boldsymbol b'\|_{L^2(U_j(B))}
    \le CM r^{2J}w(B)^{1/2-1/p}2^{-j\sigma}.
\end{equation}

By the growth estimate \eqref{eq:growth},
\begin{align*}
    w(B)^{1/2-1/p}
    &=
       \left(\frac{w(2^jB)}{w(B)}\right)^{1/p-1/2}
       w(2^jB)^{1/2-1/p}\le C2^{jA_p}w(2^jB)^{1/2-1/p}.
\end{align*}
Substitution into \eqref{eq:new_annular_atom_bound}
proves
\[
    \|(r^2\Delta_k)^n\boldsymbol b'\|_{L^2(U_j(B))}
    \le CM r^{2J}2^{-j(\sigma-A_p)}
       w(2^jB)^{1/2-1/p}.
\]
Together with the estimates for $j=0,1$, this is
\eqref{eq:new_molecule_target}.

Thus \eqref{eq:new_molecular_representation} and
Definition~\ref{def:molecule} show that
$(CM)^{-1}\mathcal T_m\boldsymbol a$ is a
$(p,2,J,\Delta_k,\varepsilon)$-molecule associated
with $B$. Since
\[
    J>\frac{\mathbf N(2-p)}{4p}
    \quad\text{and}\quad \varepsilon>0,
\]
Proposition~\ref{prop:m_in_Hp} applies and gives
\begin{equation}\label{eq:new_multiplier_on_atoms}
    \|\mathcal T_m\boldsymbol a\|_{\mathbb H^p_{\mathrm{Dunkl}}}
    \le CM,
\end{equation}
uniformly over all $(p,2,2J,\Delta_k)$-atoms.

It remains to pass from atoms to arbitrary functions.
Let $f\in\mathbb H^p_{\mathrm{Dunkl}}$.
By Theorem~\ref{teo:operator_atomic_decomposition},
applied with atomic order $2J$, there is a decomposition
\[
    f=\sum_{\nu=1}^\infty c_\nu\boldsymbol a_\nu
    \quad\text{in }L^2(dw),
    \qquad
    \sum_{\nu=1}^\infty|c_\nu|^p
    \le C\|f\|_{\mathbb H^p_{\mathrm{Dunkl}}}^p.
\]
Put
\[
    f_L=\sum_{\nu=1}^L c_\nu\boldsymbol a_\nu.
\]
For the square function $S$ defined in
\eqref{eq:square_conic}, Minkowski's inequality in
the defining integral gives subadditivity. Hence,
using $0<p\le1$ and
\eqref{eq:new_multiplier_on_atoms}, we obtain
\begin{equation}\label{eq:new_finite_sum_bound}
\begin{split}
    \|S(\mathcal T_mf_L)\|_{L^p(dw)}^p
    &\le
       \sum_{\nu=1}^L |c_\nu|^p
       \|S(\mathcal T_m\boldsymbol a_\nu)\|_{L^p(dw)}^p\le CM^p\sum_{\nu=1}^L|c_\nu|^p.
\end{split}
\end{equation}

By \eqref{eq:new_multiplier_L2},
$\mathcal T_mf_L\to\mathcal T_mf$ in $L^2(dw)$.
Recall also that
\[
    |Sh-Sg|\le S(h-g),
    \qquad
    \|Sv\|_{L^2(dw)}\le C\|v\|_{L^2(dw)}.
\]
The latter estimate follows from Fubini's theorem,
doubling, and Plancherel's theorem applied to
$t^2\Delta_k H_{t^2}v$. Therefore
\[
    S(\mathcal T_mf_L)\longrightarrow
    S(\mathcal T_mf)
    \quad\text{in }L^2(dw).
\]
Passing to an almost everywhere convergent subsequence
and applying Fatou's lemma in
\eqref{eq:new_finite_sum_bound}, we conclude that
\[
    \|\mathcal T_mf\|_{\mathbb H^p_{\mathrm{Dunkl}}}^p
    \le CM^p\sum_{\nu=1}^\infty|c_\nu|^p
    \le CM^p\|f\|_{\mathbb H^p_{\mathrm{Dunkl}}}^p.
\]
Thus $\mathcal T_m$ is bounded on
$\mathbb H^p_{\mathrm{Dunkl}}$. Since
$H^p_{\mathrm{Dunkl}}$ is its completion,
$\mathcal T_m$ has a unique bounded extension to
$H^p_{\mathrm{Dunkl}}$, satisfying
\[
    \|\mathcal T_m\|_{H^p_{\mathrm{Dunkl}}
                         \to H^p_{\mathrm{Dunkl}}}
    \le CM.
\]
\end{proof}

\section*{Declarations}

\subsection*{Use of Generative AI and AI-Assisted Technologies}
During the preparation of this manuscript, the authors utilized Google's Gemini (Thinking 3.6) and OpenAI's GPT (GPT-6-Astra) to assist with language refinement, typesetting equations, and drafting preliminary standard technical lemmas. 

The authors reviewed and edited all content as necessary and takes full responsibility for the accuracy, mathematical correctness, and overall integrity of the final published work.

\subsection*{Competing Interests}
The authors declare no competing financial or non-financial interests directly relevant to the content of this article.

\subsection*{Data Availability}
Data sharing is not applicable to this article as no datasets were generated or analyzed during the current study.

\end{document}